\pgfplotsset{compat=1.18}
\newcommand\Tstrut{\rule{0pt}{3ex}}         
\newcommand\Bstrut{\rule[-2ex]{0pt}{0pt}}   
\newcommand\BBstrut{\rule[-4ex]{0pt}{0pt}}   
\numberwithin{equation}{section}
\theoremstyle{plain}
\newtheorem{thm}{Theorem}[section]
\newtheorem{lem}[thm]{Lemma}
\newtheorem{prop}[thm]{Proposition}
\newtheorem{introthm}{Theorem}[section]
\theoremstyle{definition}
\newtheorem{dfn}[thm]{Definition}
\newtheorem{eg}[thm]{Example}
\newtheorem*{Ack}{Acknowledgement}
\newtheorem*{NoCon}{Notation and Conventions}
\newtheorem*{Out}{Outline of this paper}
\theoremstyle{remark}
\newtheorem*{rem}{Remark}
\DeclareMathOperator{\Ext}{Ext}
\DeclareMathOperator{\ext}{ext}
\DeclareMathOperator{\Pic}{Pic}
\DeclareMathOperator{\Aut}{Aut}
\DeclareMathOperator{\Auteq}{Auteq}
\DeclareMathOperator{\eqAuteq}{Auteq_{\mathrm{eq}}}
\DeclareMathOperator{\NS}{NS}
\DeclareMathOperator{\ord}{ord}
\DeclareMathOperator{\ch}{ch}
\DeclareMathOperator{\rk}{rk}
\DeclareMathOperator{\id}{id}
\DeclareMathOperator{\pt}{pt}
\DeclareMathOperator{\SL}{SL}
\DeclareMathOperator{\Coh}{\mathbf{Coh}}
\DeclareMathOperator{\Image}{Im}
\DeclareMathOperator{\rad}{rad}
\DeclareMathOperator{\Num}{Num}
\DeclareMathOperator{\For}{For}
\DeclareMathOperator{\Inf}{Inf}
\newcommand\Hom{\mathop{\mathrm{Hom}}\nolimits}
\newcommand\RHom{\mathop{\mathbf{R}\mathrm{Hom}}\nolimits}
\newcommand{\PP}[1]{\mathbb{P}^{#1}}
\newcommand{\lderived}{\mathbf{L}}
\newcommand{\rderived}{\mathbf{R}}
\newcommand\dual{\raise0.9ex\hbox{$\scriptscriptstyle\vee$}}
\newcommand{\ords}{\ord(K_S)}
\newcommand{\htop}{h_{\mathrm{top}}}
\newcommand{\hcat}{h_{\mathrm{cat}}}
\newcommand{\numg}[1]{{#1}^{N}}
\newcommand{\xmark}{\ding{55}}
\newcommand{\CB}{\mathbb{C}}
\newcommand{\HB}{\mathbb{H}}
\newcommand{\RB}{\mathbb{R}}
\newcommand{\QB}{\mathbb{Q}}
\newcommand{\ZZ}{\mathbb{Z}}
\newcommand{\LC}{\mathcal{L}}
\newcommand{\PC}{\mathcal{P}}
\newcommand{\OO}{\mathscr{O}}
\author{Tomoki Yoshida}
\address[T.Yoshida]{Department~of~Mathematics, School~of~Science~and~Engineering, Waseda~University, Ohkubo~3-4-1, Shinjuku, Tokyo~169-8555, Japan}
\email{\href{mailto:tomoki_y@asagi.waseda.jp}{tomoki\_y@asagi.waseda.jp}}
\title[Categorical Entropy of Bielliptic and Enriques Surfaces]{A Note on Categorical Entropy of Bielliptic Surfaces and Enriques Surfaces}
\date{March 10, 2026}
\keywords{Derived category, Categorical Entropy, Bielliptic Surface, Enriques Surface}
\subjclass[2020]{14F08 (primary), 14J27, 14L30 (secondary).}
\begin{document}
\begin{abstract}
In this note, we show that there exists an autoequivalence of positive categorical entropy on the derived category of bielliptic surfaces.
This gives the first example of a surface admitting positive categorical entropy in the absence of both positive topological entropy and any spherical objects. 
Moreover, we prove a Gromov–Yomdin type equality for the categorical entropy of autoequivalences on bielliptic surfaces and give a counterexample to this equality on Enriques surfaces.
\end{abstract}
\maketitle
\setcounter{tocdepth}{2} 
\tableofcontents
\setcounter{section}{-1}
\section{Introduction}\label{section: introduction}

\subsection{Entropies on Algebraic Surfaces}\label{subsection: entropies on algebraic surfaces}
Let $X$ be a smooth compact K\"ahler surface and $f\in\Aut(X)$ an automorphism of $X$.
Cantat \cite{cantat_1999_dynamique_des_automorphismes_des_surfaces_projectives_complexes} showed that if $X$ admits an automorphism $f$ of positive topological entropy, then $X$ must be either a rational surface, a K3, an Enriques, or a complex torus.

From the point of view of the derived category, the group of autoequivalences $\Auteq(D^b(X))$ may have a much richer structure than $\Aut(X)$.
Let $A(X)$ be the subgroup of $\Auteq(D^b(X))$ that consists of standard autoequivalences, more precisely, 
\[
A(X)\coloneqq \Pic(X)\rtimes\Aut(X)\times\ZZ[1], 
\]
where $\LC\in\Pic(X)$ and $f\in\Aut(X)$ act as $-\otimes\LC$ and $\lderived f^{\ast}$, respectively.

Ouchi discovered that any projective K3 surface admits an autoequivalence of positive categorical entropy in \cite{ouchi_2018_automorphisms_of_positive_entropy_on_some_hyperkahler_manifolds_via_derived_automorphisms_of_k3_surfaces}.
Mattei \cite{mattei_2021_categorical_vs_topological_entropy_of_autoequivalences_of_surfaces} has shown that every variety with a $(-2)$-curve also has that property.
In these works, the spherical twists, which are non-standard autoequivalences, play an important role in constructing suitable autoequivalences.
Note that these results imply that a variety may have an autoequivalence with positive categorical entropy even if it has no automorphism of positive topological entropy. For instance, the Hirzebruch surface $\mathbb{F}_2$ has no automorphism of positive topological entropy but has a spherical object.
Thus, our main interest is the case of $A(X)\neq\Auteq(D^b(X))$.
Note that Bondal and Orlov
\cite{bondal_orlov_2001_reconstruction_of_a_variety_from_the_derived_category_and_groups_of_autoequivalences} showed that $A(X)=\Auteq(X)$ if the canonical sheaf $\omega_X$ is (anti-)ample. 

A \emph{Bielliptic surface} $S$ is one of the algebraic surfaces of $\kappa(S)=0$.
This class does not belong to Cantat's list and has no spherical object (see \cref{proposition: no spherical object on bielliptic surface}).
However, $S$ admits non-standard autoequivalences known as \emph{relative Fourier-Mukai transform}.
The relative Fourier-Mukai transform is an essential tool for investigating $\Auteq(S)$.
Indeed, Potter and Tochitani claim that $\Auteq(S)$ is generated by the standard autoequivalences and relative Fourier-Mukai transforms (\cite{preprint_rory_2017_derived_autoequivalences_of_bielliptic_surfaces,preprint_yuki_2026_autoequivalences_of_derived_category_of_bielliptic_surfaces}).

The first observation of this paper is the following:

    \begin{introthm}[See \cref{theorem: example of positive categorical entropy}]\label{theorem in intro: example of positive categorical entropy}
        Let $S$ be a bielliptic surface. 
        Then there exists an autoequivalence $\Phi\in\Auteq(D^b(S))$ such that 
        \[
        \hcat(\Phi) > 0.
        \]
    \end{introthm}

\subsection{Gromov-Yomdin type equality}\label{subsection: Gromov-Yomdin type equality}
The classical Gromov-Yomdin equality asserts the following: 
\begin{introthm}[Gromov-Yomdin equality, \cite{gromov_1987_entropy_homology_and_semialgebraic_geometry,gromov_2003_on_the_entropy_of_holomorphic_maps,yomdin_1987_volume_growth_and_entropy}]\label{theorem in intro: Gromov-Yomdin equality}
    Let $f\in \Aut(X)$. Then, 
    \[
    \htop(f) = \log\rho(f^{\ast}|_{\oplus H^{p,p}(X)}), 
    \]
    where the $\rho$ is the spectral radius.
\end{introthm}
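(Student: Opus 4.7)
The plan is to establish the equality through two opposite inequalities: the Gromov upper bound $\htop(f) \le \log \rho$, which uses the K\"ahler structure essentially, and the Yomdin lower bound $\htop(f) \ge \log \rho$, which uses only $C^{\infty}$-smoothness of $f$.

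For the upper bound I would fix a K\"ahler form $\omega$ on $X$ and analyze the iterated graphs $\Gamma_n \coloneqq \{(x, f(x), \ldots, f^{n-1}(x)) : x \in X\} \subset X^{n}$ equipped with the product K\"ahler form $\omega_n = \sum_{k=0}^{n-1} \pi_{k}^{*}\omega$. The standard volume--entropy inequality, obtained by noting that the images in $\Gamma_n$ of an $(n,\epsilon)$-separated set are $\epsilon$-separated in $X^n$ and hence carry disjoint balls of uniformly bounded volume, yields
\[
\htop(f) \;\le\; \limsup_{n \to \infty} \frac{1}{n} \log \int_{\Gamma_n} \omega_n^{\dim X}.
\]
Pulling back along the graph parametrization and using $\dim X = 2$, the integral becomes $\int_X \bigl(\sum_{k=0}^{n-1} (f^{k})^{*}\omega\bigr)^{2}$, which expands into intersection numbers $\int_X (f^{i})^{*}\omega \cdot (f^{j})^{*}\omega = \int_X \omega \cdot (f^{j-i})^{*}\omega$ (using that $f$ is an automorphism). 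Each such term is bounded by $C \rho^{|j-i|}$ by Perron--Frobenius on the nef cone, so summation gives $O(n\rho^{n})$, hence $\htop(f) \le \log\rho$.

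For the reverse inequality I would invoke Yomdin's theorem, which says that $\htop(f) \ge \log \rho(f^{*}|_{H^{*}(X;\RB)})$ holds for any $C^{\infty}$ self-map of a compact manifold. Since $f$ is holomorphic, it preserves the Hodge decomposition, so the spectral radius on total cohomology is attained on $\oplus H^{p,p}(X)$ and matches the right-hand side of the asserted equality. Combining both directions yields the stated identity.

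The main obstacle is Yomdin's lower bound: its proof is substantially more delicate than Gromov's and relies on Vitushkin-type quantitative semialgebraic estimates that convert $C^{k}$-smoothness into lower bounds on the number of $(n,\epsilon)$-separated orbits via controlled cover complexity of images of smooth discs. The Gromov upper bound, by contrast, is essentially formal once the cohomological interpretation of the graph volume is in place, and the passage between the spectral radius on $\oplus H^{p,p}(X)$ and the one on total cohomology is automatic for holomorphic automorphisms via Perron--Frobenius on the positive cone of K\"ahler classes.
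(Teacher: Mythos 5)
This statement is the classical Gromov--Yomdin theorem, which the paper does not prove at all: it is quoted with citations to Gromov and Yomdin and used as background, so there is no internal proof to compare against. Your outline is the standard argument from those references (Gromov's graph-volume upper bound plus Yomdin's $C^{\infty}$ entropy-conjecture lower bound) and is essentially sound as a sketch, with two small points worth tightening. First, the bound $\int_X \omega\cdot (f^{m})^{*}\omega \le C\rho^{m}$ can fail as stated when the spectral radius is attained with a nontrivial Jordan block (e.g.\ parabolic automorphisms with $\rho=1$ exhibit polynomial growth); you need $C(1+m)^{k}\rho^{m}$, which of course does not change the conclusion $\htop(f)\le\log\rho$. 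Second, the passage from the spectral radius on total cohomology (which Yomdin's theorem gives) to the one on $\oplus H^{p,p}$ is not ``automatic via Perron--Frobenius'': on a surface it follows because $f^{*}$ preserves the intersection form, which is positive definite on $(H^{2,0}\oplus H^{0,2})_{\RB}$, forcing eigenvalues there of modulus $1$, together with the estimate for $H^{1}$ via the cup product into $H^{1,1}$; in higher dimension one needs the Dinh--Sibony inequalities $\rho(f^{*}|_{H^{p,q}})^{2}\le\rho(f^{*}|_{H^{p,p}})\,\rho(f^{*}|_{H^{q,q}})$. Invoking Yomdin's theorem as a black box for the lower bound is legitimate here, since that is exactly how the cited classical proof proceeds, and you correctly identify it as the genuinely hard ingredient.
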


Kikuta and Takahashi \cite{kikuta_takahashi_2019_on_the_categorical_entropy_and_the_topological_entropy} conjectured that an analogous formula for categorical entropy also holds, i.e. 
    \[
    \hcat(\Phi) = \log\rho(\numg{\Phi}), 
    \]
    where $\numg{\Phi}$ is the induced action on the numerical Grothendieck group $N(X)$ (see \cref{section: preliminaries cat entropy}).
They verified it for the cases where either $X$ is a curve, an orbifold projective line, or $\Phi$ is a standard autoequivalence \cite{kikuta_2017_on_entropy_for_autoequivalences_of_the_derived_category_of_curves,kikuta_takahashi_2019_on_the_categorical_entropy_and_the_topological_entropy,kikuta_shiraishi_takahashi_2020_a_note_on_entropy_of_autoequivalences_lower_bound_and_the_case_of_orbifold_projective_lines}.
Also, Yoshioka \cite{yoshioka_2020_categorical_entropy_for_fouriermukai_transforms_on_generic_abelian_surfaces} showed that it is true for abelian surfaces. 

After that, counterexamples to this conjecture have been found for the even-dimensional Calabi-Yau hypersurfaces \cite{fan_2018_entropy_of_an_autoequivalence_on_calabiyau_manifolds}, any projective K3 surface \cite{ouchi_2020_on_entropy_of_spherical_twists}, and surfaces with $(-2)$-curve \cite{mattei_2021_categorical_vs_topological_entropy_of_autoequivalences_of_surfaces}.

The second observation of this note is the confirmation of this equality for bielliptic surfaces by passing to the canonical cover of $S$ and Yoshioka's result:

\begin{introthm}[See \cref{theorem: Gromov-Yomdin equality for bielliptic surface}]\label{Theorem in intro: Gromov-Yomdin for bielliptic surface}
        Let $S$ be a bielliptic surface and $\Phi\in\Auteq D^b(S)$.
    Then, the equality
    \[
        \hcat(\Phi) = \log(\rho(\numg{\Phi}))
    \]
     holds. 
\end{introthm}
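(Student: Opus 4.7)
The plan is to reduce the statement to Yoshioka's Gromov--Yomdin equality for abelian surfaces via the canonical cover $\pi : A \to S$, where $A$ is an abelian surface carrying a free action of the cyclic group $G$ of order $d = \ords \in \{2,3,4,6\}$ with $A/G = S$. Pullback along $\pi$ gives an equivalence $D^b(S) \simeq D^b_G(A)$ with the $G$-equivariant derived category.

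The first step is to show that, for any $\Phi \in \Auteq(D^b(S))$, some power $\Phi^m$ with $m \le d$ admits a lift $\tilde\Phi \in \Auteq(D^b(A))$ satisfying the intertwining $\pi^* \circ \Phi^m \simeq \tilde\Phi \circ \pi^*$. On the level of Fourier--Mukai kernels, a kernel $\KC \in D^b(S\times S)$ representing $\Phi$ pulls back along $\pi \times \pi$ to a $(G\times G)$-equivariant kernel on $A \times A$; by passing to a suitable power and symmetrizing the equivariant structure, one extracts a genuine kernel on $A \times A$ whose Fourier--Mukai transform is the desired $\tilde\Phi$. Here one can also appeal to the Potter--Tochitani classification of $\Auteq(D^b(S))$ by standard autoequivalences and relative Fourier--Mukai transforms, each of whose lifts to $D^b(A)$ can be described concretely.

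The second step is to transport the Gromov--Yomdin equality between $A$ and $S$. Using the projection formula $\pi_*\pi^* \simeq \bigoplus_{g \in G} g^*$, the pullback of a split-generator of $D^b(S)$ split-generates $D^b(A)$, and comparing $\Hom$-growth rates on both sides yields $\hcat(\tilde\Phi) = \hcat(\Phi^m) = m \cdot \hcat(\Phi)$. On the numerical side, $\pi^* : \Num(S)_{\QB} \hookrightarrow \Num(A)_{\QB}$ is a finite-index embedding onto the $G$-invariants, and the intertwining above forces $\numg{\tilde\Phi}$ to restrict to $\numg{\Phi^m}$ along $\pi^*$. Combined with Kikuta--Takahashi's general lower bound $\hcat(\Phi) \ge \log \rho(\numg{\Phi})$ and Yoshioka's equality $\hcat(\tilde\Phi) = \log\rho(\numg{\tilde\Phi})$ on $A$, one gets $m \cdot \hcat(\Phi) = \log \rho(\numg{\tilde\Phi}) \ge m \cdot \log \rho(\numg{\Phi})$, and dividing by $m$ yields the theorem provided the reverse inequality $\rho(\numg{\tilde\Phi}) \le \rho(\numg{\Phi})^m$ holds.

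The main obstacle is twofold. First, producing the lift $\tilde\Phi$ with a well-controlled power $m$ and verifying the intertwining: the $G$-equivariance of the kernel must be handled carefully. Second, and more delicate, ensuring that the spectral radius of $\numg{\tilde\Phi}$ on $\Num(A)_{\QB}$ is captured by its restriction to the $G$-invariant part, so that no larger eigenvalue hides in the other $G$-isotypic components. I would address the latter via a Perron--Frobenius-style argument applied to the $G$-equivariant action on $\Num(A)_{\QB}$ together with the compatibility of the Mukai pairing with the $G$-action; alternatively, it may be handled case-by-case using the explicit Potter--Tochitani description of $\Auteq(D^b(S))$.
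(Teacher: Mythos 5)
Your overall strategy is the same as the paper's: reduce to Yoshioka's Gromov--Yomdin equality on the abelian surface arising as the canonical cover $\pi\colon X\to S$, compare entropies via pullback of generators, and compare spectral radii of $\numg{\Phi}$ and of the lifted action. Two remarks on the lifting step: the result of Bridgeland--Maciocia (as in \cref{theorem: equivalence induce eq of canonical cover}) already provides a $C_S$-equivariant lift $\Psi$ of $\Phi$ itself, so passing to a power $\Phi^m$ and ``symmetrizing'' the kernel is an unnecessary complication (and the identity $\hcat(\Phi^m)=m\,\hcat(\Phi)$ you rely on, while true, is extra machinery you would have to justify). This part of your argument could be repaired, so it is not the main issue.

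The genuine gap is the step you yourself flag at the end: you need that no eigenvalue of $\numg{\Psi}$ on the complement of $\pi^{\ast}N(S)_{\RB}$ inside $N(X)_{\RB}$ beats the spectral radius coming from $\numg{\Phi}$, and you leave this unproved, offering only a vague appeal to a Perron--Frobenius-type argument or a case-by-case check. Perron--Frobenius does not apply here (the matrices involved are not nonnegative in any natural basis), and without this step the chain of inequalities does not close: you would only get $\hcat(\Phi)\le\log\rho(\numg{\Psi})$, which may a priori be strictly larger than $\log\rho(\numg{\Phi})$. The paper closes exactly this gap in \cref{lemma: eigenvalue of Psi on K}: by equivariance $\numg{\Psi}$ is block-diagonal with respect to $N(X)_{\RB}=\pi^{\ast}N(S)_{\RB}\oplus K_{\RB}$, the Hodge index theorem shows that the Mukai pairing restricted to $K=\ker\pi_{\ast}$ is negative definite (signatures $(2,\rho_X)$ versus $(2,\rho_S)$), and since $\numg{\Psi}$ is an isometry of the Mukai pairing its restriction to $K_{\RB}$ is orthogonal for a genuine inner product, hence has all eigenvalues of modulus $1$. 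This gives $\rho(\numg{\Psi})=\rho(\numg{\Phi})$, which is precisely the estimate your proposal is missing; supplying an argument of this kind (definiteness of the complement plus the isometry property) is what you would need to make your proof complete. A minor further point: the lower bound $\hcat(\Phi)\ge\log\rho(\numg{\Phi})$ used in this setting is the one of Ikeda (\cref{theorem: lower bound - Yomdin inequality for surfaces}), not a theorem of Kikuta--Takahashi, though this is only an attribution issue.
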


These discussions can also apply to the case of the Enriques surfaces.
With the counterexample of Gromov-Yomdin type equality \cite{ouchi_2020_on_entropy_of_spherical_twists}, we have the following result:
\begin{introthm}[See \cref{theorem: counter eg Enriques}]\label{theorem in intro: counter eg in enriques}
    Let $S$ be an Enriques surface. Then, there exists an autoequivalence $\Phi\in\Auteq D^b(S)$ such that
    \[
        \hcat(\Phi) > \log\rho(\numg{\Phi}).
    \]
\end{introthm}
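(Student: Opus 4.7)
The plan is to transfer Ouchi's counterexample on K3 surfaces to the Enriques setting via the canonical cover. Let $\pi \colon \tilde{S} \to S$ denote the K3 canonical double cover, and let $\iota \in \Aut(\tilde{S})$ be the (fixed-point-free) covering involution.

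The first step is descent. Since $\iota$ acts freely, there is an equivalence $D^b(S) \simeq D^b_{\ZZ/2}(\tilde{S})$ with the $\iota$-equivariant derived category, so any $\iota$-equivariant autoequivalence of $D^b(\tilde{S})$ produces an autoequivalence $\Phi$ of $D^b(S)$, intertwined with its lift $\tilde{\Phi}$ via $\pi^{\ast} \circ \Phi \simeq \tilde{\Phi} \circ \pi^{\ast}$ and $\Phi \circ \pi_{\ast} \simeq \pi_{\ast} \circ \tilde{\Phi}$.

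The second step is to produce the equivariant lift. By Ouchi's theorem in \cite{ouchi_2020_on_entropy_of_spherical_twists}, on the projective K3 surface $\tilde{S}$ there exists a spherical object $E$ such that a suitable composition $\tilde{\Phi}_0$ of $T_E$ with a standard autoequivalence satisfies
\[
\hcat(\tilde{\Phi}_0) > \log\rho(\numg{\tilde{\Phi}_0}).
\]
To force $\iota$-equivariance I would replace $\tilde{\Phi}_0$ by the symmetrized autoequivalence $\tilde{\Phi} \coloneqq \tilde{\Phi}_0 \circ \iota^{\ast}\tilde{\Phi}_0\iota^{\ast}$, or equivalently work with the spherical twist $T_E \circ T_{\iota^{\ast}E}$ (which equals $T_{\iota^{\ast}E} \circ T_E$ up to braiding) so that $\iota^{\ast}$-conjugation fixes the resulting functor. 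This lift descends to an autoequivalence $\Phi \in \Auteq(D^b(S))$.

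The third step is the entropy comparison. On the categorical side, the intertwining identities together with $\pi_{\ast}\pi^{\ast} F \simeq F \oplus (F \otimes \omega_S)$ and $\pi^{\ast}\pi_{\ast} G \simeq G \oplus \iota^{\ast}G$ imply $\hcat(\Phi) = \hcat(\tilde{\Phi})$ (the factor of two from the composition $\tilde{\Phi}_0 \circ \iota^{\ast}\tilde{\Phi}_0\iota^{\ast}$ accounting itself for the growth on $\tilde{S}$). On the numerical side, $\pi^{\ast} \colon \numg{S} \to \numg{\tilde{S}}^{\iota}$ has finite cokernel, so $\log\rho(\numg{\Phi})$ equals the spectral radius of $\numg{\tilde{\Phi}}$ restricted to the $\iota$-invariant part of $\numg{\tilde{S}}$. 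Since Ouchi's construction already produces its excess entropy from asymptotic growth of Hom-spaces rather than from numerical eigenvalues, this restriction does not close the gap; combining these bounds yields the strict inequality $\hcat(\Phi) > \log\rho(\numg{\Phi})$.

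The main obstacle is the last point: one must verify that the symmetrization step does not accidentally cause the numerical spectral radius of the equivariant lift to catch up with its categorical entropy. Concretely, one needs a careful analysis of how the categorical growth (driven by $\Ext^{\ast}(E, T_E^nF)$ for a split generator $F$) transforms under the passage to $T_E \circ T_{\iota^{\ast}E}$ and then to the equivariant quotient. A subsidiary technical point is confirming, in enough generality, the existence of an honest spherical object on $\tilde{S}$ whose twist satisfies Ouchi's strict inequality; Ouchi's argument is robust, so essentially any $\tilde{S}$ should work, but some care may be required to handle all Enriques surfaces uniformly rather than only a generic one.
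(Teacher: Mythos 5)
Your overall strategy --- pass to the K3 canonical cover, descend an equivariant autoequivalence via Ploog/Bridgeland--Maciocia, use $\hcat(\Phi)=\hcat(\tilde\Phi)$ together with the fact that $\pi^{\ast}N(S)$ is a $\numg{\tilde\Phi}$-invariant subspace to compare spectral radii --- is exactly the paper's framework. But there is a genuine gap at the equivariance step, and you flag it yourself without closing it. You take Ouchi's functor $\tilde\Phi_0$ and replace it by the symmetrization $\tilde\Phi_0\circ\iota^{\ast}\tilde\Phi_0\iota^{\ast}$ (equivalently $T_E\circ T_{\iota^{\ast}E}$ up to standard pieces) to force $\iota$-equivariance. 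Ouchi's strict inequality $\hcat>\log\rho(\numg{\cdot})$ is proved for the specific functor $T_{\OO_X}\circ(-\otimes\OO(-H))$, not for arbitrary products of spherical twists; categorical entropy is not additive under composition and the spectral radius of a product is not controlled by those of the factors, so nothing you have written shows that the symmetrized functor still has entropy strictly exceeding its numerical spectral radius. The remark that ``the factor of two accounts for itself'' is not an argument, and without the strict inequality for the lift the whole chain $\hcat(\Phi)=\hcat(\tilde\Phi)>\log\rho(\numg{\tilde\Phi})\ge\log\rho(\numg{\Phi})$ collapses at the middle step.

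The paper sidesteps this entirely by making Ouchi's example equivariant from the start rather than symmetrizing afterwards: take $E=\OO_X$, which satisfies $\iota_{\ast}\OO_X\cong\OO_X$, and take $H=\pi^{\ast}H_S$ for an ample divisor $H_S$ on the Enriques surface, so that $-\otimes\OO(-H)$ also commutes with $\iota_{\ast}$. Then $\Psi_X=T_{\OO_X}\circ(-\otimes\OO(-H))$ is already $C_S$-equivariant, Ouchi's theorem applies verbatim to give $\hcat(\Psi_X)>\log\rho(\numg{\Psi}_X)$, and descent plus \cref{proposition: cat entropy of lift} and \cref{lemma: eigenvalue of Psi on K} finish the proof. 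Your second worry (existence of a suitable spherical object on every cover, not just a generic one) also evaporates with this choice, since $\OO_X$ is spherical on every K3 surface. If you want to salvage your version, you would have to either prove the strict inequality directly for $T_E\circ T_{\iota^{\ast}E}$ composed with your standard piece, or recognize that choosing $E$ and $H$ to be $\iota$-invariant makes the symmetrization unnecessary; the latter is the intended (and much shorter) route.
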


In summary, our results can be settled into the following table:
\begin{table}[ht]
    \centering
    \begin{tabular}{c|c|c|c|c}
    Surface type & Positive $\htop$ & spherical object & Positive $\hcat$ & \begin{tabular}{c}Gromov-Yomdin \\ for $\hcat$ \end{tabular}\\ \hline
    K3 & \checkmark & \checkmark &\checkmark via spherical twist & \xmark \Tstrut \Bstrut \\
    Abelian& \checkmark & \xmark &\checkmark via $\htop>0$ & \checkmark \Tstrut \Bstrut \\
    \begin{tabular}{c}Surface with \\ $(-2)$-curve\end{tabular}
    & generally \xmark & \checkmark &\checkmark via spherical twist & \xmark\Tstrut \BBstrut \\
    \textbf{Bielliptic}& \xmark & \xmark & 
    \begin{tabular}{c}previously no example \\ $\rightsquigarrow$ \ \cref{theorem in intro: example of positive categorical entropy}\end{tabular}
    & 
    \begin{tabular}{c}
    \checkmark\\
    \cref{Theorem in intro: Gromov-Yomdin for bielliptic surface}
    \end{tabular}
    \Tstrut \BBstrut \\
    \textbf{Enriques} & \checkmark & generically \xmark & \checkmark &
    \begin{tabular}{c}
        \xmark\\
        \cref{theorem in intro: counter eg in enriques}
    \end{tabular}
    \end{tabular}
    \vspace{5mm}
    \caption{Summary of results}
    \label{table: summary of results}
\end{table}

\begin{rem}
    \cite[Proposition 3.17]{macri_mehrotra_stellari_2009_inducing_stability_conditions} shows that the generic Enriques surface admits no spherical object. 
\end{rem}

\begin{Out}
In \cref{section: preliminaries cat entropy}, we review the definition and fundamental properties of categorical entropy.
Then, \cref{section: bielliptic surfaces} fixes notations for bielliptic surfaces and 
recall results on relative Fourier-Mukai transforms, which will be used especially in the following section. 

In \cref{section: Example of Autoequivalence with Positive Entropy}, we prove \cref{theorem in intro: example of positive categorical entropy}.
Finally, \cref{section: proof of Gromov-Yomdin type equality} is devoted to the proof of \cref{Theorem in intro: Gromov-Yomdin for bielliptic surface} and \cref{theorem in intro: counter eg in enriques}. 
\end{Out}

\begin{NoCon}
The conventions and notations used in this paper are listed below:
\begin{itemize}
    \item all varieties are smooth and projective defined over $\CB$, 
    \item the bounded derived category of coherent sheaves $D^b(\Coh(X))$ is denoted by $D^b(X)$ for short,
    \item $h^i(E) \coloneqq \dim H^i(E)$ for $E\in D^b(X)$,
    \item $\ext^i(E, F)\coloneqq \dim\Ext^{i}(E, F)  = \dim\Hom(E, F[i]) =\hom^i(E, F)$ for $E, F\in D^b(X)$, and
    \item all functors are derived unless otherwise stated. However, $\lderived$ and $\rderived$ are occasionally used to emphasize that the functor is actually derived.
\end{itemize}
\end{NoCon}

\begin{Ack}
    I am grateful to Prof. Yasunari Nagai for regular discussions and comments.
    I would also like to thank Prof. Genki Ouchi for reading and comments on the draft version of this paper, especially in the simplification of the proof of \cref{Theorem in intro: Gromov-Yomdin for bielliptic surface},  commenting on \cref{subsection: case of enriques surface}, and introducing the paper \cite{ploog_2007_equivariant_autoequivalences_for_finite_group_actions} to me and Dr. Yuki Tochitani for informing me of the content of her preprint \cite{preprint_yuki_2026_autoequivalences_of_derived_category_of_bielliptic_surfaces}.
    I am also grateful to the anonymous referee for carefully reading the manuscript and for pointing out numerous typographical errors.
    During the preparation of this article, the author was supported by JST SPRING, Grant Number JPMJSP2128.
\end{Ack}
\section{Preliminaries on Categorical Entropies}\label{section: preliminaries cat entropy}

In this section, we review the definitions and properties of derived equivalences and their entropies
assuming basic familiarity with derived categories of coherent sheaves. 
For the basics of derived categories and autoequivalences, see \cite{book_huybrechts_2006_fouriermukai_transforms_in_algebraic_geometry}.

In this paper, we treat only the derived category of coherent sheaves on a smooth projective variety as a triangulated category. 
Thus, we introduce the notion of categorical entropy in a simple form, which is suitable for our setting. See \cite{dimitrov_haiden_katzarkov_kontsevich_2014_dynamical_systems_and_categories} for a more precise definition.

\begin{thm}[\cite{dimitrov_haiden_katzarkov_kontsevich_2014_dynamical_systems_and_categories}]\label{definition: categorical entropy}
    Let $X$ be a smooth projective variety, $\Phi: D^b(X)\to D^b(X)$ an exact functor, $t\in\RB$, and $G, G'$ be classical generators of $D^b(X)$.
    The entropy of $\Phi$ is defined by
    \[
        h_t(\Phi) \coloneqq \lim_{n\to \infty} \frac{1}{n}\log \big(\sum_{m\in\ZZ} \ext^{m}(G, \Phi^{n}G')\big)e^{-mt}.
    \]
    Especially, $\hcat(\Phi)\coloneqq h_{0}(\Phi)$ is called categorical entropy.
    Then, the limit exists in $[-\infty, \infty)$ and is independent of the choices of classical generators. Moreover, $\hcat(\Phi)\ge0$.
\end{thm}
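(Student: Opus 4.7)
The plan is to reduce all three assertions to properties of the complexity functional
\[
\delta_t(E, F) \coloneqq \sum_{m \in \ZZ} \ext^m(E, F)\, e^{-mt}, \qquad E, F \in D^b(X),
\]
which is finite since $X$ smooth projective forces each $\ext^m(E,F)$ to be finite-dimensional and to vanish outside a bounded range of $m$. The stated definition then reads $h_t(\Phi) = \lim_n \tfrac{1}{n} \log \delta_t(G, \Phi^n G')$, and what must be shown is existence of this limit in $[-\infty,\infty)$, independence of the chosen classical generators, and non-negativity at $t=0$.

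The key technical step I would carry out is a submultiplicative bound
\[
\delta_t(G, \Phi^{n+k} G') \le C\, \delta_t(G, \Phi^n G')\, \delta_t(G, \Phi^k G'),
\]
for a constant $C > 0$ independent of $n, k$. Two elementary properties of $\delta_t$ drive this: subadditivity along distinguished triangles (in each slot) and monotonicity under passing to direct summands. Since $G$ is a classical generator of $D^b(X)$, one fixes once and for all a realization of $G'$ as a direct summand of a finite iterated cone of shifts of $G$; pushing this presentation through the exact functor $\Phi^k$ and then composing Hom-groups with $\Phi^n$ yields the estimate, with $C$ depending only on the chosen presentation. Granted the bound, Fekete's lemma applied to $a_n \coloneqq \log(C\, \delta_t(G, \Phi^n G'))$ gives $\lim_n \tfrac{1}{n} a_n = \inf_n \tfrac{1}{n} a_n \in [-\infty,\infty)$, and the $\tfrac{1}{n}\log C$ correction dissolves in the limit.

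Independence of generators follows from the same triangle-and-summand estimates: for two classical generators $G_1, G_2$, each is a direct summand of a finite cone of shifts of the other, producing uniform bounds $\delta_t(G_i, F) \le K\, \delta_t(G_j, F)$ for every $F \in D^b(X)$; taking $F = \Phi^n G'$, and applying the symmetric bound in the second slot, makes $K$ vanish under $\tfrac{1}{n}\log\lim$. For $\hcat(\Phi) \ge 0$, one first checks $h_0(\mathrm{id}) = 0$ by direct computation (since $\delta_0(G, G')$ is a fixed positive constant), and then produces a uniform lower bound on $\delta_0(G, \Phi^n G')$ via the generator property: because $G$ generates and $\Phi$ is exact, there exists at least one non-vanishing $\ext^m(G, \Phi^n G')$ for every $n$, which forces $\tfrac{1}{n}\log \delta_0(G, \Phi^n G') \ge 0$ in the limit. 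I expect the principal obstacle to be the uniformity of the constant $C$ across $n, k$ in the submultiplicative estimate: the presentation of $G'$ in terms of $G$ must be fixed at the start, so that dependence on $k$ enters only through $\delta_t(G, \Phi^k G')$ itself and not through the structural combinatorics of the presentation, which is the standard but delicate bookkeeping step in this kind of argument.
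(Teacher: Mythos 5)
The paper does not prove this statement at all---it is quoted verbatim from Dimitrov--Haiden--Katzarkov--Kontsevich, so the only question is whether your sketch would actually reprove it, and there is a genuine gap at its central step. Your submultiplicative bound $\delta_t(G,\Phi^{n+k}G')\le C\,\delta_t(G,\Phi^nG')\,\delta_t(G,\Phi^kG')$ is not delivered by the argument you describe: fixing once and for all a presentation of $G'$ as a summand of an iterated cone of shifts of $G$ and applying $\Phi^k$ to it presents $\Phi^kG'$ as a summand of a cone tower on shifted copies of $\Phi^kG$, \emph{not} of $G$. Subadditivity in the second slot then only converts $G'$ into $G$ (bounding $\ext^m(G,\Phi^{n+k}G')$ by sums of $\ext^{m+n_i}(G,\Phi^{n+k}G)$); it never produces the product of the two Ext-sums, and there is no way to make the constant independent of $k$ from the classical-generator property alone, because the Ext-groups $\ext^*(G,\Phi^kG')$ simply do not control how $\Phi^kG'$ is built out of $G$. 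That control is exactly the nontrivial input: one needs the two-sided comparison between the Ext-sum and the DHKK cone-complexity $\delta_t(G,-)$, whose hard direction (complexity bounded by the Ext-sum) uses smoothness and properness of $X$ via the perfectness of the diagonal, i.e.\ $\OO_\Delta\in D^b(X\times X)$ is built in finitely many steps from box-products, so that any $F$ is built from copies of a generator with multiplicities governed by $\dim\Ext^*(\widetilde G,F)$. This is the content of DHKK Theorem 2.7, and it is what your sketch dismisses as ``bookkeeping.'' The standard proof therefore runs the other way around: entropy is first defined through the complexity $\delta_t$, for which submultiplicativity is immediate from $\delta_t(G,\Phi^{n+k}G)\le\delta_t(G,\Phi^nG)\,\delta_t(\Phi^nG,\Phi^{n+k}G)\le\delta_t(G,\Phi^nG)\,\delta_t(G,\Phi^kG)$ (exact functors do not increase complexity), Fekete then gives existence and generator-independence, and only afterwards is the limit identified with the Ext-sum expression in the smooth proper case.

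Two smaller points. Your generator-independence argument is fine as stated, since applying $\Phi^n$ to a fixed presentation of $G_1'$ in terms of $G_2'$ does stay within the second slot and yields a uniform constant. But your non-negativity argument tacitly assumes $\Phi^nG'\ne 0$ for all $n$ (otherwise all Ext-groups vanish and $h_0=-\infty$); this is automatic for the autoequivalences the paper actually uses, but not for an arbitrary exact functor, so the hypothesis should be made explicit, as it is in the original reference.
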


\begin{rem}\label{remark: existence of generator}
    The generator always exists for our setting.
    Indeed, let $\OO(1)$ be a very ample line bundle on a smooth projective variety $X$.
    Orlov \cite{orlov_2009_remarks_on_generators_and_dimensions_of_triangulated_categories} have shown that 
    \[
    \bigoplus_{i=0}^{\dim X}\OO(i+k)
    \]
    is a generator of $D^b(X)$ for any $k\in\ZZ$.
\end{rem}

Next, we recall that an autoequivalence induces an action on the numerical Grothendieck group.
For more details, see \cite[Section 5.2]{book_huybrechts_2006_fouriermukai_transforms_in_algebraic_geometry}.

Let $X$ be a smooth projective variety over $\CB$ and 
$\rad\chi\coloneqq \{x\in K(X) \mid \chi(x, y) = \chi(y, x) =0 \text{ for any } y\in K(X)\}$.
The \emph{numerical Grothendieck group} of $X$ is defined and denoted by $N(X)\coloneqq K(X)/\rad\chi$.
For an exact functor $\Phi: D^b(X) \to D^b(X)$, there is the induced linear map on the numerical Grothendieck group by $\numg{\Phi}: N(X)_{\CB} \to N(X)_{\CB}$.
Moreover, if $\Phi$ is an equivalence, $\numg{\Phi}$ is also an isomorphism.

There are some partial results of the lower bound of the categorical entropy by the spectral radius, which is the categorical version of the Yomdin inequality
\cite{ikeda_2021_mass_growth_of_objects_and_categorical_entropy,kikuta_shiraishi_takahashi_2020_a_note_on_entropy_of_autoequivalences_lower_bound_and_the_case_of_orbifold_projective_lines}).
In this paper, we shall directly use the lower bound results in the following form:

\begin{thm}[{\cite[Proposition 4.7.]{ikeda_2021_mass_growth_of_objects_and_categorical_entropy}}]\label{theorem: lower bound - Yomdin inequality for surfaces}
    Let $X$ be a smooth projective surface over $\CB$. and $\Phi: D^b(X)\to D^b(X)$ be a Fourier-Mukai type endofuncor. Then, the following holds:
    \[
    \log\rho(\numg{\Phi}) \le \hcat(\Phi).
    \]
\end{thm}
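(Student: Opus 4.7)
The plan is to prove the inequality via Ikeda's theory of mass growth with respect to a Bridgeland stability condition, which interpolates between the two quantities of interest. Specifically, I would fix a Bridgeland stability condition $\sigma=(Z,\mathcal{P})$ on $D^b(X)$; such conditions exist on any smooth projective surface by the work of Bridgeland and Arcara--Bertram via tilting slope stability. For an object $E$ with Harder--Narasimhan filtration having semistable factors $A_1,\dots,A_k$ of phases $\phi_1>\dots>\phi_k$, define the mass $m_\sigma(E):=\sum_i|Z(A_i)|$, and for a classical generator $G$ set
\[
h_\sigma(\Phi):=\limsup_{n\to\infty}\frac{1}{n}\log m_\sigma(\Phi^n G).
\]

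The first step is the lower bound $\log\rho(\numg{\Phi})\le h_\sigma(\Phi)$. Since the central charge $Z$ factors through $N(X)_\CB$, we have $Z(\Phi^n G)=Z(\numg{\Phi}^n[G])$, and the triangle inequality gives $m_\sigma(\Phi^n G)\ge |Z(\numg{\Phi}^n[G])|$. One chooses a generator whose class has nonzero projection onto a (generalised) eigenvector realising the spectral radius of $\numg{\Phi}$; this can always be arranged by replacing $G$ with a direct sum including a suitable object, which remains a generator. Then the right-hand side grows at least like a constant times $\rho(\numg{\Phi})^n$ (up to polynomial factors coming from Jordan blocks), yielding the desired lower bound.

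The second, more delicate, step is the upper bound $h_\sigma(\Phi)\le\hcat(\Phi)$. This reduces to an estimate of the form $m_\sigma(E)\le C\sum_m\ext^m(G,E)e^{mt}\big|_{t=0}$ for a constant $C$ depending only on $\sigma$ and $G$. The idea is that the mass of each HN-factor $A_i$ of $E$ can be bounded by counting morphisms from appropriate shifts of $G$ to the semistable pieces of $E$, using that $G$ is a generator and that a semistable object of fixed phase receives non-trivial maps from shifts of $G$ in a predictable range of degrees. The support property of $\sigma$ is essential here, as it prevents the comparison constant from degenerating along sequences of HN-factors with unbounded phase.

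Combining the two inequalities yields $\log\rho(\numg{\Phi})\le h_\sigma(\Phi)\le\hcat(\Phi)$, which is the desired statement. The main obstacle is the second step: the HN-decomposition is not additive on distinguished triangles, so one must carefully track how mass behaves under the filtrations used to express $\Phi^n G$ in terms of shifts of $G$, which is precisely where the surface hypothesis (guaranteeing a good stability condition with the support property) enters.
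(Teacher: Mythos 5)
Note first that the paper does not prove this statement at all: it is imported verbatim from Ikeda's paper (Proposition 4.7 of \cite{ikeda_2021_mass_growth_of_objects_and_categorical_entropy}), and the proof there is exactly the strategy you outline, namely the existence of a numerical Bridgeland stability condition on any smooth projective surface together with the chain $\log\rho(\numg{\Phi})\le h_\sigma(\Phi)\le \hcat(\Phi)$. So your route is the intended one, but as written it has two genuine gaps. In the lower bound, the estimate $m_\sigma(\Phi^n G)\ge |Z(\numg{\Phi}^n[G])|$ is not enough: even after arranging $[G]$ to have nonzero component in the leading generalized eigenspace (which is fine, since $G\oplus E$ is again a generator and classes of objects span $N(X)$), the single linear functional $Z$ may annihilate that eigenspace, in which case $|Z(\numg{\Phi}^n[G])|$ grows strictly slower than $\rho(\numg{\Phi})^n$ and the argument collapses. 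The support property is what repairs this, and it is needed here, not in the second step where you invoke it: it gives a constant $C>0$ with $\|[A]\|\le C|Z(A)|$ for every $\sigma$-semistable $A$, hence $m_\sigma(E)\ge C^{-1}\|[E]\|$ by summing over the Harder--Narasimhan factors, and a norm, unlike $|Z(\cdot)|$, does detect the full vector $\numg{\Phi}^n[G]$ and therefore the spectral radius.

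For the upper bound $h_\sigma(\Phi)\le\hcat(\Phi)$, the step you yourself call ``the main obstacle'' is precisely the missing content, and it is not resolved by bounding the mass of each HN factor by counting maps from shifts of $G$ (it is unclear such a pointwise bound holds, and it is not how the known proof goes). The actual mechanism is: (a) subadditivity of mass on distinguished triangles, $m_{\sigma,t}(E)\le m_{\sigma,t}(A)+m_{\sigma,t}(B)$ for $A\to E\to B$, a nontrivial lemma proved via HN filtrations and the octahedral axiom, which bounds $m_{\sigma,t}(\Phi^nG')$ by a constant times the complexity $\delta_{G,t}(\Phi^nG')$; and (b) the comparison, valid for smooth proper categories as in \cite{dimitrov_haiden_katzarkov_kontsevich_2014_dynamical_systems_and_categories}, between complexity and the sum $\sum_m\ext^m(G,\Phi^nG')e^{-mt}$, so that the complexity growth rate is exactly $h_t(\Phi)$. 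Incidentally, the support property is not what makes this second step work (any full, locally finite stability condition gives $h_\sigma\le\hcat$); its role is in the first step as explained above. With the norm-theoretic lower bound and with (a) and (b) supplied, your argument closes and coincides with the cited proof.
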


On the other hand, Gromov-Yomdin type equality has been confirmed for some cases, for curves, standard autoequivalence, and abelian surfaces, which is important for this note.

\begin{thm}[{\cite[Proposition 2.11.]{yoshioka_2020_categorical_entropy_for_fouriermukai_transforms_on_generic_abelian_surfaces}}]\label{theorem: Gromov-Yomdin for an abelian surface}
    Let $X$ be an abelian surface and $\Phi: D^b(X) \to D^b(X)$ an equivalence.
    Then, 
    \[
    \hcat(\Phi) \le \log \rho(\numg{\Phi}).
    \]
\end{thm}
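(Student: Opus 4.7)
The plan is to bound the exponential growth rate of
\[
A_n \;\coloneqq\; \sum_{m\in\ZZ}\ext^{m}\bigl(G,\Phi^n G\bigr),
\]
where $G=\bigoplus_{i=0}^{2}\OC(iH)$ for an ample divisor $H$ is a classical generator as in \cref{remark: existence of generator}; by \cref{definition: categorical entropy} the exponential rate of $A_n$ is exactly $\hcat(\Phi)$. The first move is to reduce to a sheaf-theoretic count via the hypercohomology spectral sequence
\[
E_2^{p,q} \;=\; \Ext^{p}\bigl(G,H^{q}(\Phi^n G)\bigr) \;\Longrightarrow\; \Ext^{p+q}\bigl(G,\Phi^n G\bigr),
\]
which gives $A_n \le \sum_{p,q}\ext^{p}(G,H^{q}(\Phi^n G))$. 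Because $\Phi$ is a Fourier--Mukai equivalence, the cohomological amplitude of $\Phi^n G$ grows at worst linearly in $n$, so the sum over $q$ contributes only a polynomial factor in $n$.

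The heart of the argument is then the following estimate, which I would prove next: for every coherent sheaf $F$ on the abelian surface $X$,
\[
\sum_{p}\ext^{p}(G,F) \;\le\; C_G\bigl(\|v(F)\|+1\bigr)^{c_0},
\]
where $v(F)=(\rk F,\,c_1(F),\,\ch_{2}(F)+\rk F\cdot[\pt])$ is the Mukai vector, $\|\cdot\|$ is a fixed norm on $N(X)_{\RB}$, and $C_G,c_0$ are constants depending only on $G$. Since $K_X=0$ and $\td(X)=1$, Hirzebruch--Riemann--Roch gives $\chi(G,F)=-\langle v(G),v(F)\rangle$, which is linear in $v(F)$; the matching one-sided bound on the \emph{non-alternating} sum $\sum\ext^{p}$ I would obtain by decomposing $F$ along its Harder--Narasimhan filtration with respect to $H$ and handling each slope-semistable factor with Bogomolov's inequality, Serre duality (using $K_X=0$), and Simpson-type bounds for global sections on abelian surfaces.

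Finally, since $\numg{\Phi}$ preserves the Mukai pairing on $N(X)$, elementary Jordan-form estimates give $\|\numg{\Phi}^{n}v\|\le \rho(\numg{\Phi})^{n}\cdot n^{c}$ for every $v\in N(X)$ and a constant $c$ depending only on $\Phi$. Applied to $v(G)$ and, through the amplitude control from the first paragraph, to each $v(H^{q}(\Phi^n G))$, this combines with the previous estimate to give $A_n \le \mathrm{poly}(n)\cdot \rho(\numg{\Phi})^{nc_0}$; tracking constants in the Harder--Narasimhan step one can take $c_0=1$, after which passing to $\limsup\tfrac{1}{n}\log$ yields $\hcat(\Phi)\le\log\rho(\numg{\Phi})$. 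The main obstacle is the middle estimate, namely a uniform polynomial (ideally linear) bound on $\sum_p\ext^{p}(G,F)$ by the Mukai vector of an \emph{arbitrary} coherent sheaf $F$, not only a slope-semistable one. This is where the abelian-surface geometry --- Fourier--Mukai self-duality, triviality of $K_X$, and Bogomolov's inequality --- enters essentially, and constitutes the main technical content of Yoshioka's argument.
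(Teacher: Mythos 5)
The paper does not actually prove this statement: it is quoted from Yoshioka (Proposition 2.11 of the cited paper), so your attempt has to be measured against Yoshioka's argument rather than anything in the text. Your scheme --- reduce $\sum_m\ext^m(G,\Phi^nG)$ to a sheaf count via the spectral sequence, bound $\sum_p\ext^p(G,F)$ polynomially in the Mukai vector $v(F)$, then feed in the Jordan-form estimate for $\numg{\Phi}^n$ --- breaks at the middle step, which you yourself call the heart of the argument, and which is in fact \emph{false} as stated. Take an abelian surface with an elliptic fibration, say $X=E_1\times E_2$ with fibre class $E$, $E^2=0$, and set $F_N=\OC_X(NE)\oplus\OC_X(-NE)$. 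Then $v(F_N)=(2,0,0)$ is independent of $N$, while $\hom(\OC_X,F_N)=h^0(\OC_X(NE))=N$, so no estimate of the form $\sum_p\ext^p(G,F)\le C_G(\|v(F)\|+1)^{c_0}$ can hold for arbitrary coherent sheaves (your $G$ contains $\OC_X$ as a summand). The proposed proof via Harder--Narasimhan plus Bogomolov plus Serre duality cannot repair this, because the Mukai vectors of the HN factors are not controlled by $v(F)$: in the example the factors have classes $(1,\pm NE,0)$, of norm growing like $N$, while $v(F_N)$ stays fixed.

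There is a second, related gap in the final step: you apply $\|\numg{\Phi}^nv\|\le\rho(\numg{\Phi})^n n^c$ ``to each $v(H^q(\Phi^nG))$'', but the numerical action only controls the alternating sum $\sum_q(-1)^qv(H^q(\Phi^nG))=\numg{\Phi}^nv(G)$; bounding the cohomological amplitude bounds the \emph{number} of cohomology sheaves, not the norms of their individual classes, and cancellation between them is exactly the danger. What makes the theorem true on abelian surfaces is special structure absent from your outline: by the Mukai--Orlov description of equivalences of abelian varieties, the image under $\Phi^n$ of each line-bundle summand of $G$ is, up to shift, a semi-homogeneous sheaf, and for such objects the dimensions of all Ext groups against $G$ are determined by Mukai vectors; equivalently one can argue through stability conditions and mass growth, whose distinguished component every autoequivalence of an abelian surface preserves. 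Yoshioka's proof exploits input of this kind, constraining the actual objects $\Phi^nG$ and not merely their numerical classes; without it your argument does not close.
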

\begin{rem}
Note that combining the lower bound \cref{theorem: lower bound - Yomdin inequality for surfaces}, 
we have Gromov-Yomdin type equality for any abelian surface (\cite[Proposition 2.10.]{yoshioka_2020_categorical_entropy_for_fouriermukai_transforms_on_generic_abelian_surfaces}).
\end{rem}

\section{Basics on the Bielliptic Surfaces}\label{section: bielliptic surfaces}

In this \cref{section: bielliptic surfaces}, we introduce some basic facts and fix the notations about the bielliptic surfaces. See \cite[Section 2.]{nuer_2025_stable_sheaves_on_bielliptic_surfaces_from_the_classical_to_the_modern} for more details.

\begin{dfn}
    Let $S$ be a smooth projective surface over $\CB$.
    $S$ is called \emph{biellipic surface} if $\kappa(S)=0$, 
    $q(S)=h^{1,0}(S) =1$, and $p_g(S) = h^{2,0}(S) = 0$.
\end{dfn}
It is known that for a bielliptic surface $S$, there exist elliptic curves $E, F$ and a finite group $G$ such that $S\cong(E\times F)/G$ and such that $G$ acts on $E$ by translation and on $F$ in a way so that $F/G\cong\PP{1}$.

Denote $F\cong\CB/(\ZZ\oplus\ZZ\tau)$ ($\tau\in\HB$), and
\[
\ords\coloneqq \min\{m>0 \mid mK_S = 0\}.
\]
It is also known that bielliptic surfaces can be classified into 7 types (\cref{table: Classification of Bielliptic Surfaces}). See \cite[List VI.20]{book_beauville_1996_complex_algebraic_surfaces} for instance.

\begin{table}[ht]
    \centering
    \begin{tabular}{c|c|c|c|c}
        Type & $\tau$ & $G_S$ & Action on $F$ & $\ords$ \\ \hline
        1 & \text{any} & $\ZZ/2\ZZ$ & $x\mapsto -x$ & 2 \Tstrut \Bstrut \\
        2 & \text{any} & $\ZZ/2\ZZ\times\ZZ/2\ZZ$ & 
        \begin{tabular}{c}
            $x\mapsto -x$, \\
            $x\mapsto x+\epsilon ~~~~ (\epsilon\in\ZZ/2\ZZ)$
        \end{tabular}
        & 2 \Tstrut \Bstrut \\
        3 & $i$ & $\ZZ/4\ZZ$ & $x\mapsto ix$ & 4 \Tstrut \Bstrut \\
        4 & $i$ & $\ZZ/4\ZZ\times\ZZ/2\ZZ$ & 
        \begin{tabular}{c}
            $x\mapsto ix$, \\
            $x\mapsto x+\bigg(\frac{1+i}{2}\bigg)$
        \end{tabular}
        & 4 \Tstrut \Bstrut \\
        5 & $\omega$ & $\ZZ/3\ZZ$ & $x\mapsto \omega x$ & 3 \Tstrut \Bstrut \\
        6 & $\omega$ & $\ZZ/3\ZZ\times\ZZ/3\ZZ$ & 
        \begin{tabular}{c}
            $x\mapsto \omega x$, \\
            $x\mapsto x+\bigg(\frac{1-\omega}{3}\bigg)$
        \end{tabular}
        & 3 \Tstrut \Bstrut\\
        7 & $\omega$ & $\ZZ/6\ZZ$ & $x\mapsto -\omega x$ & 6 \Tstrut \Bstrut 
    \end{tabular}
    \vspace{5mm}
    \caption{Classification of Bielliptic Surfaces}
    \label{table: Classification of Bielliptic Surfaces}
\end{table}

Let $n\coloneqq \ords$, and $k\coloneqq |G|/n$. 
Additionally, we denote $A\coloneqq \frac{1}{n} E$ and $B \coloneqq \frac{1}{k} F$.
Then, we have 
\[
\Num(S) = \ZZ[A]\oplus\ZZ[B].
\]
and their intersection form is $A^2 = B^2 =0, A.B=1$.

Note that, in our case,the image of the map
\[
\ch: K(X)_{\QB} \to H^{\ast}(X, \QB)
\]
is equal to $N(X)$.
For a bielliptic surface $S$, 
$N(S) = H^0(S, \ZZ)\oplus \Num(S) \oplus H^4(S, \ZZ)\cong\ZZ\oplus A\ZZ \oplus B\ZZ\oplus\ZZ$
(see also \cite[Proposition 2.5.]{preprint_rory_2017_derived_autoequivalences_of_bielliptic_surfaces}).
We fix and denote the basis of $N(S)$ as
\begin{equation}
\begin{split}
[S] = \begin{pmatrix}
    1&0&0&0
\end{pmatrix}, ~~~
&A = \begin{pmatrix}
    0&1&0&0
\end{pmatrix}, \\
B = \begin{pmatrix}
    0&0&1&0
\end{pmatrix}, \text{ and }
&[\pt] = \begin{pmatrix}
    0&0&0&1
\end{pmatrix}. 
\end{split}
\label{equation: basis of numerical Grothendieck group}
\end{equation}

If there is a spherical twist (associated to a spherical object), the composition of the spherical twists possibly has a positive entropy \cite{barbacovi_kim_2023_entropy_of_the_composition_of_two_spherical_twists}.
However, it can be deduced that the bielliptic surface can admit no spherical object.

\begin{prop}[{\cite[Remark 4.8.]{sosna_2013_fouriermukai_partners_of_canonical_covers_of_bielliptic_and_enriques_surfaces}}]\label{proposition: no spherical object on bielliptic surface}
    Let $S$ be a bielliptic surface.
    Then, there is no spherical object in $D^b(S)$. 
\end{prop}

\subsection{Relative Fourier-Mukai Transforms}\label{subsection: relative FMT}
Let $\pi: X\to C$ be a relatively minimal elliptic surface. 
Denote 
\begin{itemize}
\item $f$: class of smooth fiber of $\pi$, 
\item $\lambda \coloneqq \min\{f.D>0 \mid D\in\Num(X)\}$, and
\item $J(a,b)$: moduli space of stable and pure dimension $1$ supported on a smooth fibre of $\pi$ sheaves of class $(a,b)$, where $a>0$, $b\in\ZZ$, and $\gcd(a\lambda,b)=1$.
\end{itemize}
It is known that the moduli space $J(a,b)$ is derived equivalent to $X$ by a functor called \emph{relative Fourier-Mukai transform} \cite{bridgeland_1998_fouriermukai_transforms_for_elliptic_surfaces,bridgeland_maciocia_2002_fouriermukai_transforms_for_k3_and_elliptic_fibrations}.

\begin{thm}[{\cite[Theorem 5.3.]{bridgeland_1998_fouriermukai_transforms_for_elliptic_surfaces}}]
\label{theorem: relative FMT}
    Let $\pi: X \to C$ be an elliptic surface and 
    \[
    \begin{pmatrix}
        c & a\\
        d & b
    \end{pmatrix}
    \in \SL(2, \ZZ)
    \]
    such that $\lambda$ divides $d$ and $a>0$.
    Then, there exists a sheaf $\PC$ on $X\times J(a,b)$, which is flat, strongly simple over both factors, and 
    $\ch\PC_y = (0. af, b)$ on $X$ and 
    $\ch\PC_x = (0. af, c)$ on $J(a,b)$ 
    for any $(x,y)\in X\times J(a,b)$.

    Furthermore, for any such sheaf $\PC$, the Fourier-Mukai functor $\Phi_{\PC}: D^b(J(a,b)) \to D^b(X)$
    is an equivalence and for any $E\in D^b(J(a,b))$,
    \begin{equation}
        \begin{pmatrix}
            \rk(\Phi_{\PC}(E))\\
            c_1(\Phi_{\PC}(E)).f
        \end{pmatrix}
        = 
        \begin{pmatrix}
            c & a\\
            d & b
        \end{pmatrix}
        \begin{pmatrix}
            \rk(E)\\
            c_1(E).f
        \end{pmatrix}.
        \notag
    \end{equation}
\end{thm}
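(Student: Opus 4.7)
My plan is to build $\PC$ as the universal family of a fine moduli space, verify that the associated Fourier--Mukai transform is an equivalence via a strong-simplicity criterion, and finally read off the prescribed action on $(\rk, c_1.f)$ from the Chern character of the kernel.

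First, I would use the coprimality hypothesis $\gcd(a\lambda, b) = 1$ to show that every Gieseker semistable sheaf of class $(0, af, b)$ supported on a fibre is automatically stable (hence simple), so no properly semistable objects appear. This is exactly what is needed to rule out the Brauer-group obstruction to fineness and to conclude that $J(a, b)$ carries a universal sheaf $\PC$ on $X \times J(a, b)$. The restriction $\PC_y = \PC|_{X \times \{y\}}$ then has Chern character $(0, af, b)$ tautologically. For the dual expression $\ch \PC_x = (0, af, c)$, I would first show that $J(a, b)$ itself carries a compatible elliptic fibration over $C$, assigning $y$ to the image in $C$ of the support of $\PC_y$. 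Relative to this fibration, $\PC_x = \PC|_{\{x\} \times J(a, b)}$ is a stable pure one-dimensional fibre sheaf; the multiplicity $a$ is preserved by the symmetry of the construction, and the remaining invariant is pinned to $c$ by the determinant relation $bc - ad = 1$, together with a Grothendieck--Riemann--Roch computation for the two projections.

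Second, to prove that $\Phi_\PC$ is an equivalence I would invoke Bridgeland's criterion: it suffices that $\PC$ be flat over both factors and strongly simple, meaning that for distinct $y_1 \neq y_2 \in J(a, b)$,
\[
\Ext^i_X(\PC_{y_1}, \PC_{y_2}) = 0 \quad \text{for all } i,
\]
that each $\PC_y$ is simple, and the analogous statements after swapping the roles of $X$ and $J(a, b)$. When $y_1, y_2$ lie over distinct points of $C$ the vanishing is immediate from disjointness of supports. When they lie over the same smooth fibre the vanishing reduces to the classical Mukai-type computation on an elliptic curve, using stability to kill $\Hom$ and Serre duality on the curve to kill $\Ext^1$. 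Simplicity of $\PC_y$ follows from stability, and flatness of $\PC$ over each projection is built into the moduli construction.

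Finally, the action on $(\rk, c_1.f)$ falls out of Grothendieck--Riemann--Roch: the projection formula expresses $\ch \Phi_\PC(E)$ in terms of $\ch E$ and $\ch \PC$, and substituting the known slice Chern characters $\ch \PC_y = (0, af, b)$ and $\ch \PC_x = (0, af, c)$ reproduces exactly the prescribed $\SL(2, \ZZ)$ action. The step I expect to be the main obstacle is the strong-simplicity verification over the \emph{singular} fibres of $\pi$, where stable pure one-dimensional sheaves of the given numerical class can degenerate in subtle ways and where the isomorphism type of $\PC_y$ can change within a fixed topological class. Controlling this reduces to showing that $J(a, b)$ inherits the same configuration of multiple and reducible fibres as $X$, which forces the Ext-vanishings to persist over every fibre, smooth or not.
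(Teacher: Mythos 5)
You should first be aware that the paper contains no proof of this statement: it is quoted, with citation, as Theorem 5.3 of Bridgeland's paper on Fourier--Mukai transforms for elliptic surfaces, so the only comparison available is with Bridgeland's original argument. Your outline does follow that argument's architecture (fineness of $J(a,b)$ from $\gcd(a\lambda,b)=1$, the universal sheaf as kernel, strong simplicity, Bridgeland's equivalence criterion, and Grothendieck--Riemann--Roch for the action on $(\rk, c_1.f)$), but the two places you leave open are exactly where the real content lies, and your proposed fixes would not close them.

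The main gap is the one you flag yourself. Vanishing of $\Ext^i_X(\PC_{y_1},\PC_{y_2})$ for $y_1\neq y_2$ on a singular or multiple fibre does not follow from showing that $J(a,b)$ ``inherits the same configuration of fibres as $X$'': that is a statement about the geometry of the moduli space and gives no control on extensions between two non-isomorphic stable sheaves on a fixed bad fibre, where Serre duality on the (possibly non-reduced, reducible) support curve is not available in the form you invoke. The argument that actually works is numerical: $\Hom=0$ by stability, $\Ext^2(\PC_{y_1},\PC_{y_2})\cong\Hom(\PC_{y_2},\PC_{y_1}\otimes\omega_X)^{\vee}=0$ by Serre duality on $X$ and stability --- noting that on a multiple fibre $\omega_X$ restricts to a nontrivial torsion line bundle, so even $\PC_y\otimes\omega_X\cong\PC_y$ (needed for the equivalence criterion) requires an Atiyah-type argument that a stable fibre sheaf of the relevant multiplicity absorbs such torsion twists --- and then $\Ext^1=0$ because $\chi(\PC_{y_1},\PC_{y_2})=0$: both Chern characters have the form $(0,af,\ast)$ and $f^2=0$, so Riemann--Roch forces $\ext^1=\hom+\ext^2=0$. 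This Euler-characteristic trick, not curve-level Serre duality, is what makes every fibre, smooth or not, harmless. A second gap: you never use the hypothesis $\lambda\mid d$, and your appeal to ``symmetry of the construction'' plus $bc-ad=1$ does not pin down $\ch\PC_x=(0,af,c)$. The universal sheaf is unique only up to twisting by the pullback of a line bundle from $J(a,b)$, and such twists shift the entries $c$ and $d$ of the induced matrix (e.g.\ $d$ is the fibre degree of $\Phi_{\PC}(\OO_{J(a,b)})$, hence necessarily divisible by $\lambda$); the divisibility $\lambda\mid d$ together with the determinant condition is precisely what lets one normalize the twist so that the stated Chern characters, and hence the stated $\SL(2,\ZZ)$-action, are realized. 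Without that normalization step your final GRR computation only determines the matrix up to this twisting ambiguity.
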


Especially, as $S$ has no non-isomorphic Fourier-Mukai partners, the relative Fourier-Mukai transformation induces an element $\Phi\in\Auteq(D^b(S))$.
The relative Fourier-Mukai transform is important in studying derived equivalences of a bielliptic surface.
\begin{prop}[{\cite[Proposition 4.1.]{preprint_rory_2017_derived_autoequivalences_of_bielliptic_surfaces}}]\label{propositon: relative FMT is not standard}
    Let $S$ be a bielliptic surface and $p: S\to E/G$ and $q: S\to F/G$ its fibrations.
    Then, for each fibration, the induced relative Fourier-Mukai transform is not a standard autoequivalence.
\end{prop}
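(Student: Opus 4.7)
The plan is to distinguish the relative Fourier--Mukai transform $\Phi_{\PC}$ from any standard autoequivalence by examining the induced action on the rank component of the numerical Grothendieck group. Every standard autoequivalence $\Phi \in A(S) = \Pic(S)\rtimes\Aut(S)\times\ZZ[1]$ preserves rank up to a sign: tensoring by a line bundle and pulling back along an automorphism both preserve $\rk$, while the shift $[1]$ merely negates it. In particular, $\rk(E) = 0$ forces $\rk(\Phi(E)) = 0$ for every standard $\Phi$.

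On the other hand, \cref{theorem: relative FMT} records that a relative Fourier--Mukai transform $\Phi_{\PC}$ coming from a matrix $\begin{pmatrix} c & a \\ d & b \end{pmatrix}\in\SL(2,\ZZ)$ with $a > 0$ acts on the rank by
\[
\rk(\Phi_{\PC}(E)) = c\cdot \rk(E) + a\cdot (c_1(E).f).
\]
I would therefore exhibit a test object $E\in D^b(S)$ with $\rk(E) = 0$ and $c_1(E).f \neq 0$, so that $\rk(\Phi_{\PC}(E)) = a(c_1(E).f) \neq 0$, directly contradicting the rank behavior of any standard autoequivalence. The natural candidate is the structure sheaf $\OO_{\sigma}$ of a multisection $\sigma$ of the fibration in question. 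Conveniently, the bielliptic surface $S$ carries two distinct elliptic fibrations $p$ and $q$, and a general fiber of one is a multisection of the other (the two projections from the product $E\times F$ already meet transversally), so such an $E$ is available for each fibration.

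The main technical step is numerical bookkeeping rather than any deep argument: one has to express the fiber class $f$ of each fibration $p$ and $q$ as an integer multiple of $A$ or $B$ in $\Num(S) = \ZZ[A]\oplus\ZZ[B]$, read off $\lambda = \min\{f.D > 0\}$, and confirm that a matrix $M\in\SL(2,\ZZ)$ with $a > 0$ and $\lambda\mid d$ genuinely exists (for instance $M = \begin{pmatrix} 1 & 1\\ \lambda & \lambda + 1\end{pmatrix}$, which has determinant $1$ and satisfies both constraints). Once the fiber classes and a valid $M$ are fixed, the rank comparison is a one-line check. I do not anticipate any serious obstacle beyond verifying that the specific relative Fourier--Mukai transform referenced in the proposition indeed corresponds to a matrix with $a \neq 0$, which is exactly what makes the argument non-vacuous; the remaining difficulty is just keeping track of the two fibrations simultaneously.
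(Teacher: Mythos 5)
Your argument is correct, but note that the paper does not actually prove this proposition: it is quoted from Potter's preprint (Proposition 4.1 there), so there is no internal proof to compare against. Your route is the natural one and is sound: standard autoequivalences in $\Pic(S)\rtimes\Aut(S)\times\ZZ[1]$ preserve rank up to sign, hence kill no rank-zero class into a nonzero-rank class, while by \cref{theorem: relative FMT} the relative Fourier--Mukai transform satisfies $\rk(\Phi_{\PC}(E)) = c\,\rk(E) + a\,(c_1(E).f)$ with $a>0$, so it suffices to feed in a rank-zero object whose first Chern class meets the fibre class nontrivially. Your choice of test object works: writing the two fibre classes as $nA$ and $kB$ in $\Num(S)=\ZZ A\oplus\ZZ B$, the structure sheaf of a fibre of one fibration has rank zero and pairs to $nk\neq 0$ with the fibre class of the other, and this is available symmetrically for both fibrations. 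Two small points of hygiene: first, the ``induced'' autoequivalence of $D^b(S)$ is $\Phi_{\PC}$ precomposed with the equivalence coming from an isomorphism $S\cong J(a,b)$, but such an isomorphism preserves rank and the fibre pairing, so your computation is unaffected; second, your digression about exhibiting a matrix $M\in\SL(2,\ZZ)$ with $a>0$ and $\lambda\mid d$ is not needed for the proposition as stated, since the hypothesis $a>0$ is built into the definition of the relative Fourier--Mukai transform in \cref{theorem: relative FMT} (and the concrete transform used later, in \cref{example: Potter eg 4.2.}, has $a=1$, $d=0$). This is also consistent with the numerical data recorded there, where the class $A$ of rank zero is sent to a class of rank $n\neq 0$, which is exactly your obstruction.
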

\begin{thm}[{\cite[Theorem 1.2.]{preprint_rory_2017_derived_autoequivalences_of_bielliptic_surfaces}}, \cite{preprint_yuki_2026_autoequivalences_of_derived_category_of_bielliptic_surfaces}]
    Let $S$ be a bielliptic surface. Then, $\Auteq D^b(S)$ is generated by standard autoequivalences and relative Fourier-Mukai transforms along the two elliptic fibrations.
\end{thm}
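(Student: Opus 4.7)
The plan is to take an arbitrary $\Phi \in \Auteq(D^b(S))$ and, by composing with elements of the conjectured generating set, reduce it to the identity. I would split this into a \emph{numerical part}, which controls the image of $\Auteq(D^b(S))$ in the isometry group of $(N(S), \chi)$, and a \emph{kernel part}, which handles autoequivalences acting trivially on $N(S)$.

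For the numerical part, I would work in the basis $[S], A, B, [\pt]$ of \eqref{equation: basis of numerical Grothendieck group} with the intersection form $A^2 = B^2 = 0$ and $A \cdot B = 1$. Standard autoequivalences act by easily described matrices: the shift negates, tensor by a line bundle is unipotent on the flag $\{[S]\} \subset \{[S], A, B\} \subset N(S)$, and pullback along an automorphism permutes the fiber classes at most by a sign. The relative Fourier-Mukai transforms of \cref{theorem: relative FMT} act on the pair $(\rk, c_1 \cdot f_i)$ by an $\SL(2,\ZZ)$-matrix, with $f_i \in \{A, B\}$ a generic fiber of each of the two fibrations. I would check that the subgroup these transformations generate is the full image of $\Auteq(D^b(S)) \to O(N(S), \chi)$; that is, for any isometry realized by an autoequivalence, a composition of standard autoequivalences and the two relative Fourier-Mukai transforms realizes the same numerical action. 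After such a composition we may assume $\numg{\Phi} = \id$.

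For the kernel part, the natural route is the canonical cover $\tilde{S} = E \times F \to S$, which is \'etale Galois with group $G$ (or a suitable related cover for the full biquotient) and realizes $\tilde{S}$ as an abelian surface. Via Ploog's equivariant descent \cite{ploog_2007_equivariant_autoequivalences_for_finite_group_actions}, $D^b(S) \simeq D^b_G(\tilde{S})$, and an autoequivalence of $D^b(S)$ with trivial numerical action lifts to a $G$-equivariant autoequivalence of $D^b(\tilde{S})$ with trivial numerical action. Orlov's classification for abelian surfaces then identifies such a lift as a composition of shifts, tensor by a line bundle, and a translation. Because $S$ admits no spherical object (\cref{proposition: no spherical object on bielliptic surface}), no spherical twist intervenes, and a straightforward check shows that the equivariant lift descends to a standard autoequivalence of $S$.

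The main obstacle I expect is the matching step in the numerical part: verifying that the two relative Fourier-Mukai transforms, together with the standard autoequivalences, generate the entire image in $O(N(S), \chi)$ rather than a proper subgroup. The two generic fibers $A$ and $B$ have qualitatively different behaviors across the seven families of \cref{table: Classification of Bielliptic Surfaces} (for instance $|G|/n$ varies), so a case-by-case check, type by type, may be unavoidable for isometries that mix $A$ and $B$. A further subtlety is that the kernel part must control the $G$-character by which an equivariant structure on a Fourier-Mukai kernel is twisted; tracking this ambiguity cleanly, and noting that \cref{propositon: relative FMT is not standard} separates the relative transforms from the standard group, is what makes the final identification work.
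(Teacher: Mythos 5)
First, note that the paper does not prove this statement at all: it is imported verbatim from Potter \cite{preprint_rory_2017_derived_autoequivalences_of_bielliptic_surfaces} and Tochitani \cite{preprint_yuki_2025_autoequivalences_of_derived_category_of_bielliptic_surfaces}, so there is no in-paper proof to compare yours against. Judged on its own terms, your outline has the right overall architecture (kill the numerical action using the conjectured generators, then classify the numerically trivial part via an abelian cover), but as written it has genuine gaps. The most serious one is the ``numerical part'': the statement that standard autoequivalences together with the two relative Fourier--Mukai transforms realize the \emph{entire} image of $\Auteq(D^b(S))$ in $O(N(S),\chi)$ is essentially the content of the theorem, and your proposal only says ``I would check'' it. Note that \cref{theorem: relative FMT} does not hand you all of $\SL(2,\ZZ)$ along each fibration — the matrix is constrained by $a>0$ and $\lambda\mid d$, and the two fibrations interact on the full rank-$4$ lattice $N(S)$ — so the case-by-case analysis over the seven types in \cref{table: Classification of Bielliptic Surfaces} that you defer is precisely where the real work lies; deferring it leaves the proof without its core.

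The ``kernel part'' also has concrete problems. The cover $E\times F\to S$ is \'etale Galois with group $G_S$, but it is \emph{not} the canonical cover when $G_S$ is non-cyclic (types 2, 4, 6); the canonical cover has cyclic group $\ZZ/\ords\ZZ$, and the lifting theorem of Bridgeland--Maciocia (\cref{theorem: equivalence induce eq of canonical cover}) is stated for that cover, so ``or a suitable related cover'' hides a nontrivial step. More importantly, even granting a lift $\Psi$, a numerically trivial $\Phi$ on $S$ need not lift to a numerically trivial $\Psi$ on the abelian surface: $N(X)_{\RB}$ splits as $\pi^{\ast}N(S)_{\RB}\oplus K$, and a priori $\Psi$ only acts on $K$ by an isometry (compare \cref{lemma: eigenvalue of Psi on K}, which yields spectral radius $1$, not the identity), so invoking Orlov's classification of numerically trivial autoequivalences of abelian surfaces does not directly apply — and even for those, the kernel contains more than shifts, translations and degree-zero line bundle twists (e.g.\ pullback by $(-1)$). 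Finally, the descent only determines $\Phi$ up to the character/equivariant-structure ambiguity in Ploog's sequence \eqref{equation: descent for aut of canonical cover}, which you acknowledge but do not resolve. In short: the skeleton is reasonable and broadly consistent with how Potter and Tochitani proceed, but the surjectivity of the numerical action and the numerically-trivial classification on $S$ are asserted rather than proved, so the proposal does not yet constitute a proof.
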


\begin{eg}[{\cite[Example 4.2.]{preprint_rory_2017_derived_autoequivalences_of_bielliptic_surfaces}
}]\label{example: Potter eg 4.2.}
 Let 
    \[
    P = \begin{pmatrix}
        1 & 1\\
        0 & 1
    \end{pmatrix}
    \]
    and $\Phi_P$ be a relative Fourier-Mukai transform along $p: S\to E/G$ which is associated to $P$ 
    (and tensoring a suitable line bundle).
    Then, $\numg{\Phi}_P$ acts as
    \begin{align*}
            \numg{\Phi}_P
            \begin{pmatrix}
                1&0&0&0
            \end{pmatrix} &= 
            \begin{pmatrix}
                1&0&0&0
            \end{pmatrix}\\
            
            \numg{\Phi}_P
            \begin{pmatrix}
                0&1&0&0
            \end{pmatrix} &= 
            \begin{pmatrix}
                n&1&0&0
            \end{pmatrix}\\
            
            \numg{\Phi}_P
            \begin{pmatrix}
                0&0&1&0
            \end{pmatrix} &= 
            \begin{pmatrix}
                0&0&1&0
            \end{pmatrix}\\
            
            \numg{\Phi}_P
            \begin{pmatrix}
                0&0&0&1
            \end{pmatrix} &= 
            \begin{pmatrix}
                0&0&k&1
            \end{pmatrix}
    \end{align*}
\end{eg}

\section{Example of Autoequivalence with Positive Entropy}\label{section: Example of Autoequivalence with Positive Entropy}
    \begin{thm}\label{theorem: example of positive categorical entropy}
        Let $S$ be a bielliptic surface. 
        Then there exists an autoequivalence $\Phi\in\Auteq(D^b(S))$ such that 
        \[
        \hcat(\Phi) > 0.
        \]
    \end{thm}

    \begin{rem}
        Standard autoequivalences act via upper-triangular matrices under the identification \eqref{equation: basis of numerical Grothendieck group}.
        
        By contrast, non-standard autoequivalences send $\mathcal{O}_x$ to objects with nonzero rank or nontrivial first Chern class, which produces nonzero entries in the lower-triangular part of the action matrix on the numerical Grothendieck group.
        Indeed, an autoequivalence is standard if and only if it sends a skyscraper sheaf $\mathcal{O}_x$ of a point to another skyscraper sheaf up to a shift.
        
        Consequently, although each action taken individually need not produce positive categorical entropy, their composition can acquire eigenvalues with modulus greater than $1$.
        In the following proof, we explicitly construct such a composition by combining a relative Fourier–Mukai transform with a standard autoequivalence.
    \end{rem}
    \begin{proof}
    Let $H\coloneqq E+F = nA+kB$, 
    $\Phi_{\mathcal{P}}$ be the relative Fourier-Mukai transform in \cref{example: Potter eg 4.2.}, and 
    $\Phi_m\coloneqq \Phi_{\mathcal{P}}\circ (-\otimes \OO(-mH))$ for an $m\in\ZZ_{>0}$.
        Then, we will see $\hcat(\Phi_m) > 0.$
        As for an object $E$ of rank $r$ 
        \[
        \ch(E\otimes\OO(-mH)) = r + c_1(E)-mr(nA+kB) + \ch_2 E - mc_1(E)(nA+kB) +\frac{r}{2}(-mH)^2, 
        \]
        $\numg{(-\otimes\OO(-mH))}$ acts on $N(S)$ as 
        \begin{align*}
            \numg{(-\otimes\OO(-mH))}
            \begin{pmatrix}
                1&0&0&0
            \end{pmatrix} &= 
            \begin{pmatrix}
                1&-mn&-mk&m^2nk
            \end{pmatrix},\\
            
            \numg{(-\otimes\OO(-mH))}
            \begin{pmatrix}
                0&1&0&0
            \end{pmatrix} &= 
            \begin{pmatrix}
                0&1&0&-mk
            \end{pmatrix},\\
            
            \numg{(-\otimes\OO(-mH))}
            \begin{pmatrix}
                0&0&1&0
            \end{pmatrix} &= 
            \begin{pmatrix}
                0&0&1&-mn
            \end{pmatrix},\text{ and}\\
            
            \numg{(-\otimes\OO(-mH))}
            \begin{pmatrix}
                0&0&0&1
            \end{pmatrix} &= 
            \begin{pmatrix}
                0&0&0&1
            \end{pmatrix}
        \end{align*}
        via the identification \eqref{equation: basis of numerical Grothendieck group}.
        Denote its representation matrix
        \[
        M_1 = \begin{pmatrix}
            1&-mn&-mk&m^2nk\\
            0&1&0&-mk\\
            0&0&1&-mn\\
            0&0&0&1
        \end{pmatrix}.
        \]
    Next functor is \cite[Example 4.2.]{preprint_rory_2017_derived_autoequivalences_of_bielliptic_surfaces}.
    $\numg{\Phi}_P$ acts as
    \begin{align*}
            \numg{\Phi}_P
            \begin{pmatrix}
                1&0&0&0
            \end{pmatrix} &= 
            \begin{pmatrix}
                1&0&0&0
            \end{pmatrix}, \\
            
            \numg{\Phi}_P
            \begin{pmatrix}
                0&1&0&0
            \end{pmatrix} &= 
            \begin{pmatrix}
                n&1&0&0
            \end{pmatrix},\\
            
            \numg{\Phi}_P
            \begin{pmatrix}
                0&0&1&0
            \end{pmatrix} &= 
            \begin{pmatrix}
                0&0&1&0
            \end{pmatrix},\text{ and}\\
            
            \numg{\Phi}_P
            \begin{pmatrix}
                0&0&0&1
            \end{pmatrix} &= 
            \begin{pmatrix}
                0&0&k&1
            \end{pmatrix}.
    \end{align*}

    Denote its representation matrix
    \[
        M_2 = \begin{pmatrix}
            1&0&0&0\\
            n&1&0&0\\
            0&0&1&0\\
            0&0&k&1
        \end{pmatrix}
        \]
    Then,
    \[
    M_2M_1 = \begin{pmatrix}
            1&-mn&-mk&m^2kn\\
            n&1-mn^2&-mkn&-mk+m^2kn^2\\
            0&0&1&-mn\\
            0&0&k&1-mkn
    \end{pmatrix}.
    \]
    and its eigenvalues are listed in \cref{table: example of positive spectral radius}:
    \begin{table}[ht]
        \centering
        \begin{tabular}{c|c|c|c}
        Type & $m$ & eigenvalues of $M_2M_1$& $\rho$ \\ \hline
        $1$ & 2 & $-1, -3\pm2\sqrt{2}$ & $3+2\sqrt{2}$ \Tstrut \Bstrut \\ 
        $2$ & 2 & $-3\pm2\sqrt{2}$ & $3+2\sqrt{2}$ \Tstrut \Bstrut \\ 
        $3$ & 1 & $-1, -7\pm4\sqrt{3}$ & $7+4\sqrt{3}$ \Tstrut \Bstrut \\ 
        $4$ & 1 & $-7\pm4\sqrt{3}, -3\pm2\sqrt{2}$ & $7+4\sqrt{3}$ \Tstrut \Bstrut \\ 
        $5$ & 1 & $-\omega, \omega^2, \frac{1}{2}\big(-7\pm3\sqrt{5}\big)$ & $\frac{1}{2}\big(7+3\sqrt{5}\big)$ \Tstrut \Bstrut \\ 
        $6$ & 1 & $\frac{1}{2}\big(-7\pm3\sqrt{5}\big)$ & $\frac{1}{2}\big(7+3\sqrt{5}\big)$ \Tstrut \Bstrut \\ 
        $7$ & 1 & $-2\pm\sqrt{3}, -17\pm12\sqrt{2}$ & $17+12\sqrt{2}$ \Tstrut \Bstrut \\ 
        \end{tabular}
        \vspace{5mm}
        \caption{Spectral radii of $\numg{\Phi}_P\circ\numg{(-\otimes\OO(-mH))}$}
        \label{table: example of positive spectral radius}
    \end{table}
    
    In particular, for every bielliptic surface, there exists an autoequivalence with positive categorical entropy.
        \end{proof}

\section{Proof of Gromov-Yomdin Type Equality}\label{section: proof of Gromov-Yomdin type equality}

\subsection{Canonical Covers}\label{subsection: canonical cover}
In this \cref{subsection: canonical cover}, we review the relation between the canonical cover and the derived equivalence.
Let $S$ be a smooth surface of finite canonical order and 
\[
 \pi: X \to S
\]
the canonical cover of $S$.
Note that $\pi$ is an \'etale cover of degree $\ords$.
Moreover, $\pi: X \to X/C_S\cong S$ is the quotient morphism,where $C_S=\ZZ/\ords\ZZ$.
Then, naturally, 
\begin{equation}
    \lderived\pi^{\ast}: D^{b}(S) \stackrel{\cong}{\longrightarrow} D^b_{C_S}(X).
    \label{equation: pi pullback from S to D_G(X)}
\end{equation}

The canonical covering map induces the following inclusion 
\[
\pi^{\ast}:N(S)_{\RB}=\RB\oplus \NS(S)_{\RB} \oplus\RB \hookrightarrow \RB\oplus \NS(X)_{\RB} \oplus\RB =N(X)_{\RB}
\]
as $\pi_{\ast}\pi^{\ast}\alpha = \ords\alpha$ for any $\alpha\in N(S)$.

Let $K$ be the orthogonal complement of $\Image(\pi^{\ast})$ in $N(X)$ i.e. $N(X) = \pi^{\ast}N(S)\oplus K$ and $K= \ker(\pi_{\ast})$.
Denote $\ell\coloneqq \dim_{\RB} K_{\RB}$. 

Note that when $S$ is a bielliptic surface, $\ords= 2,3,4, $ or $6$, $X$ is an abelian surface, and $0\le \ell\le 2$.

The Theorem below is on 
{\cite[Section 7.3]{book_huybrechts_2006_fouriermukai_transforms_in_algebraic_geometry}},
originally in \cite{bridgeland_maciocia_2001_complex_surfaces_with_equivalent_derived_categories}.

\begin{thm}\label{theorem: equivalence induce eq of canonical cover}
Suppose $S$ and $T$ are smooth projective varieties with canonical sheaves of finite order.
Assume that $S$ is derived equivalent to $T$.
Then, any equivalence $\Phi: D^b(S) \to D^b(T)$ admits an equivariant lift $\Psi: D^b(X) \to D^b(Y)$, where $Y$ is the canonical cover of $T$.
\end{thm}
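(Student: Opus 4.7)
The plan is to realize $\Phi$ as a Fourier--Mukai transform and then lift its kernel to the product of the canonical covers $X\times Y$. By Orlov's representability theorem, write $\Phi=\Phi_{\mathcal{E}}$ for a kernel $\mathcal{E}\in D^b(S\times T)$, unique up to isomorphism. Since $\Phi$ is an equivalence, it intertwines the Serre functors on both sides; translating this intertwining into the language of kernels yields an isomorphism
$$
\mathcal{E}\otimes(\omega_S^{-1}\boxtimes\omega_T)\cong\mathcal{E}.
$$
In particular $\ord(K_S)=\ord(K_T)$; denote this common order by $n$ and set $C:=\ZZ/n\ZZ$, which acts on $X$ and $Y$ as the respective Galois groups.

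Next I would introduce the intermediate cyclic cover $\widetilde Z\to S\times T$ that trivializes the line bundle $L:=\omega_S^{-1}\boxtimes\omega_T$. The character of $L$ in the character group of $C\times C$ (the Galois group of $X\times Y\to S\times T$) has kernel equal to the diagonal subgroup $\mathrm{diag}(C)$, so $\widetilde Z=(X\times Y)/\mathrm{diag}(C)$ fits into a tower of étale covers $X\times Y \xrightarrow{\,q\,} \widetilde Z\to S\times T$. The isomorphism displayed above, once normalized so that its $n$-fold iterate equals the identity, gives precisely the cyclic descent datum needed to lift $\mathcal{E}$ to an object $\mathcal{E}'\in D^b(\widetilde Z)$. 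Then set $\widetilde{\mathcal{E}}:=q^{\ast}\mathcal{E}'$ and define $\Psi:=\Phi_{\widetilde{\mathcal{E}}}\colon D^b(X)\to D^b(Y)$.

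Finally, I would verify three things: (i) since $\widetilde{\mathcal{E}}$ is pulled back from $\widetilde Z$, it carries a canonical equivariant structure for the diagonal $C$-action on $X\times Y$, so $\Psi$ is naturally $C$-equivariant; (ii) flat base change along the commutative square given by $\pi_S\times\pi_T\colon X\times Y\to S\times T$, combined with the identification \eqref{equation: pi pullback from S to D_G(X)} for both $S$ and $T$, shows that $\Psi$ descends to $\Phi$; and (iii) $\Psi$ is itself an equivalence, obtained by lifting a quasi-inverse of $\Phi$ through the same construction and appealing to uniqueness of Fourier--Mukai kernels.

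The main obstacle is the normalization in step two: an arbitrary isomorphism $\mathcal{E}\otimes L\cong\mathcal{E}$, when iterated $n$ times, only returns to the identity up to an $n$-th root of unity, so one must rescale it to obtain a genuine descent datum compatible with the $C$-action. This rescaling uses the fact that $\End(\mathcal{E})\cong\CB\cdot\id$, which holds because $\Phi$ is a (genuine) equivalence; this is the technical heart of \cite{bridgeland_maciocia_2001_complex_surfaces_with_equivalent_derived_categories}, and it explains why the hypothesis cannot be weakened to $\Phi$ being merely fully faithful.
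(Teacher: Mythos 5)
This theorem is not proved in the paper at all—it is quoted from \cite{bridgeland_maciocia_2001_complex_surfaces_with_equivalent_derived_categories} (via \cite[Section 7.3]{book_huybrechts_2006_fouriermukai_transforms_in_algebraic_geometry})—and your argument is essentially the proof given in those references: represent $\Phi$ by a kernel, use the Serre-functor relation $\mathcal{E}\otimes(\omega_S^{-1}\boxtimes\omega_T)\cong\mathcal{E}$ together with simplicity of the kernel ($\End(\mathcal{E})\cong\CB$) to normalize a descent datum for the intermediate cyclic cover $(X\times Y)/\mathrm{diag}(C)$, and pull back to get a $C$-equivariant kernel on $X\times Y$ whose induced functor lifts $\Phi$ by flat base change. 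Your outline is correct; the two steps you only sketch—that the normalized isomorphism genuinely produces an object on the cyclic cover (descent in the derived category along the finite \'etale map), and that the lifted functor is an equivalence (via lifting a quasi-inverse and analyzing the resulting lift of the identity)—are precisely where the cited sources carry out the remaining technical work.
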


\begin{prop}\label{proposition: cat entropy of lift}
    With the settings in \cref{theorem: equivalence induce eq of canonical cover}, entropies of $\Phi$ and $\Psi$ coincide. 
    Especially, the equation between categorical entropies holds, i.e. 
    \[
        \hcat(\Phi) = \hcat(\Psi).
    \]
\end{prop}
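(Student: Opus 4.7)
The plan is to compare the Hom-growth that defines $h_t$ on $X$ with the analogous growth on $S$, by selecting generators compatible with $\pi^{\ast}$ and exploiting the equivariance of the lift. First I would fix a very ample line bundle $\OO_S(1)$ on $S$; since $\pi$ is finite, $\pi^{\ast}\OO_S(1)$ is ample on $X$, and Orlov's criterion (\cref{remark: existence of generator}) applied on both sides produces classical generators
$$
    G := \bigoplus_{i=0}^{2}\OO_S(i)\in D^b(S), \qquad G_X := \pi^{\ast}G\in D^b(X),
$$
with analogous choices $G', G'_X:=\pi^{\ast}G'$. The equivariant lift produced by \cref{theorem: equivalence induce eq of canonical cover} supplies a natural isomorphism $\Psi\circ\pi^{\ast}\cong\pi^{\ast}\circ\Phi$, which iterates to $\Psi^{n}\pi^{\ast}\cong\pi^{\ast}\Phi^{n}$ for every $n\ge 1$.

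Next I would translate the Hom-spaces on $X$ back to $S$ via the $(\pi^{\ast},\pi_{\ast})$-adjunction combined with the projection formula:
\begin{align*}
    \Hom^{m}_{X}(G_X,\Psi^{n}G'_X)
    &\cong \Hom^{m}_{X}(\pi^{\ast}G,\pi^{\ast}\Phi^{n}G')\\
    &\cong \Hom^{m}_{S}(G,\pi_{\ast}\pi^{\ast}\Phi^{n}G')\\
    &\cong \Hom^{m}_{S}\bigl(G,\Phi^{n}G'\otimes \pi_{\ast}\OO_X\bigr).
\end{align*}
For the canonical cover one has $\pi_{\ast}\OO_X\cong\bigoplus_{i=0}^{\ords-1}\omega_{S}^{-i}$, so tensor--Hom adjunction for line bundles converts the above into
$$
    \ext^{m}_{X}(G_X,\Psi^{n}G'_X)
    =\sum_{i=0}^{\ords-1}\ext^{m}_{S}\bigl(G\otimes\omega_{S}^{i},\,\Phi^{n}G'\bigr).
$$
Setting $\widetilde G := \bigoplus_{i=0}^{\ords-1}G\otimes\omega_{S}^{i}$, which is still a classical generator of $D^b(S)$ since it contains $G$ as a direct summand, this identity can be rewritten as
$$
    \sum_{m\in\ZZ}e^{-mt}\ext^{m}_{X}(G_X,\Psi^{n}G'_X)
    = \sum_{m\in\ZZ}e^{-mt}\ext^{m}_{S}(\widetilde G,\Phi^{n}G'),
$$
valid for all $n$ and $t$.

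Applying $\lim_{n\to\infty}\tfrac{1}{n}\log(\cdot)$ and invoking independence of $h_t$ from the choice of classical generators (\cref{definition: categorical entropy}), the left-hand side computes $h_t(\Psi)$ and the right-hand side computes $h_t(\Phi)$, so $h_t(\Phi)=h_t(\Psi)$ for every $t$; specializing to $t=0$ gives $\hcat(\Phi)=\hcat(\Psi)$. The main point requiring care is verifying that $\pi^{\ast}G$ is genuinely a \emph{classical} generator of $D^b(X)$ and not merely a (split-)generator; this is exactly where ampleness of $\pi^{\ast}\OO_S(1)$ (ensured by finiteness of $\pi$) together with Orlov's criterion is essential. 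A secondary, but routine, point is that the equivariant isomorphism supplied by \cref{theorem: equivalence induce eq of canonical cover} genuinely iterates to $\Psi^{n}\pi^{\ast}\cong\pi^{\ast}\Phi^{n}$, which holds because the lift is constructed via an equivariant Fourier--Mukai kernel; everything else reduces to the projection-formula bookkeeping above.
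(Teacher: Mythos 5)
Your proof is correct and follows essentially the same route as the paper: pull back a generator along the finite cover $\pi$, use the intertwining of the lift with $\pi^{\ast}$ (resp.\ $\pi_{\ast}$), adjunction and the projection formula with $\pi_{\ast}\OO_X\cong\bigoplus_i\omega_S^{\pm i}$, and conclude by independence of $h_t$ from the choice of classical generators. The only cosmetic difference is that you absorb the $\bigoplus_i\omega_S^{i}$ factor into the first generator $\widetilde G$, whereas the paper absorbs it into the second generator as $\pi_{\ast}\pi^{\ast}G$.
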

\begin{proof}
    Let $\OO_S(1)$ be a very ample line bundle on $S$.
    As $\pi$ is finite, the pullback of an ample divisor on $S$ is ample again on $X$. 
    Thus, we may assume that $\pi^{\ast}\OO(1)$ is also very ample on $X$ if we take $c_1(\OO(1))$ to be sufficiently large.
    Let $F\coloneqq \bigoplus_{i=0}^{\dim S}\OO_S(i)$, which is a classical generator of $D^b(S)$.
    Then, $F_X\coloneqq \lderived\pi^{\ast} F$ and 
    \begin{align}
    \pi_{\ast}\pi^{\ast}F &\cong F\otimes\pi_{\ast}\OO_X \notag\\
    &\cong F\otimes(\oplus_{i=0}^{m-1} \omega_{S}^{i}) \notag
    \end{align}
    are classical generators of $D^b(X)$ and $D^b(S)$ respectively as $\pi_*\pi^*F$ contains $F$ as a direct summand.
    For any $n\in \ZZ_{>0}$, 
    \begin{align}
        \RHom(F, \Phi^n(\pi_{\ast}\pi^{\ast}(F))) 
        &\cong \RHom(F, \pi_{\ast}\Psi^n(\pi^{\ast}F)) \notag \\
        &\cong \RHom(F_X, \Psi^n(F_X)). \notag
    \end{align}
    Therefore, as the categorical entropy is independent of the choice of the generator, we have 
    \[
        h_t(\Phi) = h_t(\Psi).
    \]
    In particular, if we set $t=0$ we have $\hcat(\Phi) = \hcat(\Psi)$.
\end{proof}

\begin{lem}\label{lemma: eigenvalue of Psi on K}
    In the same settings in \cref{theorem: equivalence induce eq of canonical cover}, assume that $S=T$, $X(=Y)$ is an abelian surface or a K3 surface, and $l>0$.
    Let $\numg{\Psi}: N(X)_{\RB} \to N(Y)_{\RB}$ be the action defined by the lift $\Psi$.
    Then, the spectral radius of the linear map $\numg{\Psi}|_{K}$ is $1$. 
\end{lem}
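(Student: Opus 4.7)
The plan is to identify $K_{\RB}$ as a negative-definite subspace of $N(X)_{\RB}$ that is preserved by $\numg{\Psi}$, and then to use the compactness of the orthogonal group of a definite form to conclude that every complex eigenvalue of $\numg{\Psi}|_{K_{\RB}}$ has absolute value $1$.

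First I would check that $\numg{\Psi}$ stabilises $K_{\RB}$. Since $\Psi$ is a $C_S$-equivariant lift of $\Phi$, the induced map $\numg{\Psi}$ commutes with the $C_S$-action on $N(X)_{\RB}$. The invariant subspace $N(X)_{\RB}^{C_S}$ coincides with $\pi^{\ast}N(S)_{\RB}$: the inclusion $\supseteq$ is immediate, and the reverse follows from $\pi^{\ast}\pi_{\ast}=\sum_{g\in C_S}g^{\ast}$, which acts as $\ords\cdot\id$ on any invariant class. Its orthogonal complement $K_{\RB}$ is therefore the sum of the non-trivial $C_S$-isotypic components, and is $\numg{\Psi}$-invariant.

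Next I would show that $K_{\RB}$ is negative definite with respect to the symmetric Mukai pairing. When $X$ is abelian or K3, this pairing on $N(X)_{\RB}$ has signature $(2,\rk N(X)-2)$. Because $\pi$ is \'etale of degree $\ords$, Mukai vectors pull back and the pairing scales by $\ords$, so $\pi^{\ast}N(S)_{\RB}$ carries a form of the same signature as $N(S)_{\RB}$. A direct computation shows that the Mukai pairing on $N(S)_{\RB}$ has signature $(2,\rk N(S)-2)$ as well: in the bielliptic case via the basis \eqref{equation: basis of numerical Grothendieck group}, and in the Enriques case via Riemann--Roch (with $\chi(\OO_S)=1$) combined with Hodge index on $\NS(S)$. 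In the orthogonal decomposition $N(X)_{\RB}=\pi^{\ast}N(S)_{\RB}\oplus K_{\RB}$, both positive directions are already accounted for in $\pi^{\ast}N(S)_{\RB}$, so $K_{\RB}$ must be negative definite of rank $\ell$.

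Finally, since $\Psi$ is an autoequivalence of the Calabi--Yau surface $X$, $\numg{\Psi}$ preserves the symmetric Mukai pairing, and its restriction to $K_{\RB}$ is an isometry of a negative-definite real quadratic space. This restriction then lies in the compact group $O(K_{\RB})\cong O(\ell)$, so every complex eigenvalue of $\numg{\Psi}|_{K}$ has absolute value $1$. Since $\ell>0$, this yields $\rho(\numg{\Psi}|_{K})=1$. I expect the main technical obstacle to be the signature computation for the Enriques case, where one has to track carefully the Riemann--Roch contribution from $\chi(\OO_S)=1$ and the $2$-torsion of $K_S$; the bielliptic case follows immediately from the intersection data already set up in \cref{section: bielliptic surfaces}.
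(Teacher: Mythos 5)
Your proposal is correct and follows essentially the same route as the paper: identify $K_{\RB}$ as a $\numg{\Psi}$-invariant, negative-definite subspace for the Mukai pairing (the paper gets definiteness by comparing the signatures $(2,\rho_X)$ and $(2,\rho_S)$ via the Hodge index theorem, exactly your signature bookkeeping), and then conclude that the restriction lies in a compact orthogonal group so all eigenvalues have modulus $1$. Your justification of the $\numg{\Psi}$-invariance of $K$ via the $C_S$-isotypic decomposition is a slightly more explicit version of the paper's one-line equivariance argument, but the substance is identical.
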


\begin{proof}
    Let $[c_1(\LC_1)], \ldots, [c_1(\LC_{\ell})]$ be a basis of $K$ and $m\coloneqq \ords$.

    As $\Psi$ is $C_S$-equivariant, the action of $\numg{\Psi}$ preserves $\Image\pi^{\ast}$ and $K=\ker\pi_{\ast}$.
    Therefore, it can be written as
    \begin{equation}
        \numg{\Psi} = 
        \begin{pmatrix}
            \numg{\Phi} & 0 \\
            0 & A
        \end{pmatrix}, 
    \end{equation}
    where $A$ is the invertible matrix associated with the above basis.

    The numerical Grothendieck group $N(X)$ (resp. $N(S)$) has the lattice structure as a sublattice of the Mukai lattice $H^{\ast}(X, \ZZ)$ (resp. $H^{\ast}(S, \ZZ)$).
    The Hodge index theorem induces that these lattices $N(X)$ and $N(S)$ have signature $(2, \rho_X)$ and $(2, \rho_S)$, respectively.
    Thus, $K$ is negative definite of signature $(0, \rho_X -\rho_S)$, and it follow that 
    $h\coloneqq -\langle-,-\rangle: K_{\RB}\times K_{\RB} \to \RB$ is an inner product, where $\langle-,-\rangle$ is the Mukai pairing.

    Then, since $\Psi$ is an equivalence, for any $v, w\in N(X)_{\RB}$, 
    \[
    h(w, v) = h(Aw, Av).
    \]
    Therefore, $A$ is unitary, and thus, we have $|\lambda| =1$ for any eigenvalue $\lambda$ of $A=\numg{\Psi}|_{K}$.
\end{proof}

\subsubsection{Ploog's Results}\label{subsubsection: ploog's results}
The rest of this subsection is devoted to recalling the results of \cite[Section 3.3]{ploog_2007_equivariant_autoequivalences_for_finite_group_actions} combined with \cite{bridgeland_maciocia_2017_fouriermukai_transforms_for_quotient_varieties} that will be used in \cref{subsection: case of enriques surface}.
Ploog  showed that there exists an exact sequence
\begin{equation}
    0 \to \Auteq(D^b(X))^{C_S} \to \eqAuteq(D^b(X)) \to \Aut C_S \to 0, 
    \label{equation: descent for aut of canonical cover}
\end{equation}
where 
\[
    \eqAuteq(D^b(X))\coloneqq \{ (F, \mu)\in\Auteq(D^b(X))\times \Aut(C_S) \mid g_{\ast}F\cong F\circ\mu(g)_{\ast}, {}^{\forall}g\in C_S\}
\]
and the second map is the projection.

Let 
$\For: D^b_{C_S}(X) \to D^b(X)$ be the forgetful functor and  
$\Inf: D^b(X)\to D^b_{C_S}(X)$
be the inflation functor defined by $\Inf(E) = \bigoplus_{g\in C_S} g^{\ast}E$.
Let 
\[
\Auteq_{\Delta}^{C_S}(D^b(X))\coloneqq \{(P, \rho)\in D^{G_{\Delta}}(X\times X) \mid \text{Fourier-Mukai functor } \Phi_P \text{ is an equivalence}\}.
\]
Then there exists the following commutative diagram:
\begin{equation}
\begin{tikzcd}
\Auteq(D^b(X))^{C_S} \arrow[d, hook] & \Auteq_{\Delta}^{C_S}(D^b(X)) \arrow[l, "\For", two heads] \arrow[r, "\Inf"] & \Auteq(D^b(S)) \arrow[d] \\
\eqAuteq(D^b(X)) \arrow[rr, two heads] & &\eqAuteq(D^b(X))/C_S.
\end{tikzcd}
\label{equation: commutative diagram}
\end{equation}

\subsection{Proof for bielliptic surface}\label{subsection: end of proof}
\begin{thm}\label{theorem: Gromov-Yomdin equality for bielliptic surface}
    Let $S$ be a bielliptic surface and $\Phi\in\Auteq D^b(S)$.
    Then, the following holds:
    \[
        \hcat(\Phi) = \log(\rho(\numg{\Phi})).
    \]
\end{thm}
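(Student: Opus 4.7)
The plan is to sandwich $\hcat(\Phi)$ between $\log\rho(\numg{\Phi})$ on both sides. The lower bound $\log\rho(\numg{\Phi})\le\hcat(\Phi)$ is immediate from \cref{theorem: lower bound - Yomdin inequality for surfaces}, since every autoequivalence of $D^b(S)$ is of Fourier--Mukai type (by Orlov's representability theorem). The entire work is therefore in establishing the upper bound $\hcat(\Phi)\le \log\rho(\numg{\Phi})$, and the strategy is to lift everything to the canonical cover, where Yoshioka's abelian-surface result \cref{theorem: Gromov-Yomdin for an abelian surface} supplies the needed inequality.

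Concretely, let $\pi:X\to S$ be the canonical cover; since $S$ is bielliptic, $X$ is an abelian surface. By \cref{theorem: equivalence induce eq of canonical cover}, applied to the identity equivalence $S\xrightarrow{\cong}S$, any $\Phi\in\Auteq D^b(S)$ admits an equivariant lift $\Psi\in\Auteq D^b(X)$. \cref{proposition: cat entropy of lift} then gives $\hcat(\Phi)=\hcat(\Psi)$, and \cref{theorem: Gromov-Yomdin for an abelian surface} gives
\[
\hcat(\Psi)\le\log\rho(\numg{\Psi}).
\]
So it suffices to prove $\rho(\numg{\Psi})=\rho(\numg{\Phi})$.

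For this, recall the decomposition $N(X)_{\RB}=\pi^{\ast}N(S)_{\RB}\oplus K$ from \cref{subsection: canonical cover}. Because $\Psi$ is an equivariant lift, $\numg{\Psi}$ respects this splitting, and by construction it acts on $\pi^{\ast}N(S)_{\RB}$ via $\numg{\Phi}$ (under the identification $\pi^{\ast}:N(S)_{\RB}\hookrightarrow N(X)_{\RB}$). Hence $\numg{\Psi}$ is block diagonal with blocks $\numg{\Phi}$ and some $A\in\GL(K_{\RB})$, so
\[
\rho(\numg{\Psi})=\max\bigl(\rho(\numg{\Phi}),\,\rho(A)\bigr).
\]
If $\ell=\dim K_{\RB}=0$ there is nothing more to do. If $\ell>0$, \cref{lemma: eigenvalue of Psi on K} gives $\rho(A)=1$. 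Since $\numg{\Phi}$ is an automorphism of the lattice $N(S)$, its determinant is $\pm 1$, forcing $\rho(\numg{\Phi})\ge 1$. Therefore $\rho(\numg{\Psi})=\rho(\numg{\Phi})$ in either case, and combining gives
\[
\hcat(\Phi)=\hcat(\Psi)\le\log\rho(\numg{\Psi})=\log\rho(\numg{\Phi}),
\]
which together with the Yomdin lower bound yields the desired equality.

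The only nontrivial ingredient beyond citing the preparatory results is verifying that the block $A$ arising from the lift has spectral radius $1$, which is exactly \cref{lemma: eigenvalue of Psi on K}; everything else is formal bookkeeping to transfer Yoshioka's abelian-surface equality across the canonical cover. Hence the main obstacle is not in this proof itself but in the already-established input ensuring the existence of the equivariant lift $\Psi$ and the unitarity of $A$ on the negative-definite orthogonal complement $K$.
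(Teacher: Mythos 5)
Your proposal is correct and follows essentially the same route as the paper: lift $\Phi$ to a $C_S$-equivariant autoequivalence $\Psi$ of the abelian canonical cover, use \cref{proposition: cat entropy of lift} to equate entropies, \cref{theorem: Gromov-Yomdin for an abelian surface} (with the Yomdin-type lower bound) on $X$, and \cref{lemma: eigenvalue of Psi on K} together with the block decomposition of $\numg{\Psi}$ to identify $\rho(\numg{\Psi})=\rho(\numg{\Phi})$. The paper's proof is just a terser version of this same chain of equalities, so no substantive difference.
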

\begin{proof}

Let $\Phi\in\Auteq D^b(S)$ and $\Psi$ be the $G$-equivariant lift to $D^b(X)$ given in \cref{theorem: equivalence induce eq of canonical cover}.

Combining \cref{theorem: Gromov-Yomdin for an abelian surface}, \cref{proposition: cat entropy of lift}, and \cref{lemma: eigenvalue of Psi on K}, we have
\[
    \hcat(\Phi) = \hcat(\Psi) = \log\rho(\numg{\Psi}) = \log\rho(\numg{\Phi}).
\qedhere
\]
\end{proof}

\subsection{Counterexample of Kikuta-Takahashi Conjecture on Enriques Surfaces}\label{subsection: case of enriques surface}
In this \cref{subsection: case of enriques surface}, we apply the discussions of \cref{subsection: canonical cover} to the case of an Enriques surface and its canonical covering.

Let $X$ be a projective K3 surface, $T_{\OO_X}$ the spherical twist associated to $\OO_X$, and $H$ an ample divisor on $X$ of digree $H^2\ge10$. 
In \cite{ouchi_2020_on_entropy_of_spherical_twists} Ouchi showed that 
\begin{equation}
\Psi_X\coloneqq T_{\OO_X}\circ (-\otimes \OO(-H))
\label{equation: counterexample of GY equality on K3}
\end{equation}
satisfies $\hcat(\Psi_X) > \log\rho(\numg{\Psi}_X)$. 
In particular, any K3 surface admits a counterexample to the conjectured Gromov-Yomdin type equality for categorical entropies.

For the remainder of this paper, we denote an Enriques surface by $S$ and its canonical cover by $\pi:X\to S$.
In this case, $C_S = \ZZ/2\ZZ$ and let $\iota$ be the generator of $C_S$.

\begin{prop}\label{proposition: Enriques surface esixtence of Phi}
There exists $\Phi_S\in\Auteq(D^b(S))$ whose $C_S$-equivariant lift is $\Psi_X$.
\end{prop}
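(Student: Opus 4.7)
The plan is to verify that the autoequivalence $\Psi_X$ of \eqref{equation: counterexample of GY equality on K3} belongs to the subgroup $\Auteq(D^b(X))^{C_S}$ of $\iota^{\ast}$-invariant autoequivalences, and then apply Ploog's descent diagram \eqref{equation: commutative diagram} from \cref{subsubsection: ploog's results} to extract the desired $\Phi_S$.

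First, I would choose the ample divisor $H$ on $X$ in the definition of $\Psi_X$ to be the pullback $\pi^{\ast}H_S$ of an ample divisor on $S$; since $\pi$ is finite étale, $H$ remains ample on $X$ and $\OO_X(-H)\cong \pi^{\ast}\OO_S(-H_S)$ is canonically $\iota$-equivariant, so the twist $-\otimes\OO_X(-H)$ commutes with $\iota^{\ast}$. Next, because $\pi$ is étale one has $\iota^{\ast}\OO_X\cong \OO_X$, and the naturality identity $F\circ T_E\circ F^{-1}\cong T_{F(E)}$, valid for any autoequivalence $F$ and any spherical object $E$, applied to $F=\iota^{\ast}$ and $E=\OO_X$, yields $\iota^{\ast}\circ T_{\OO_X}\cong T_{\OO_X}\circ \iota^{\ast}$. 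Composing these two commutations shows $\Psi_X\in \Auteq(D^b(X))^{C_S}$.

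Once $\Psi_X$ is known to be $C_S$-invariant, the commutative diagram \eqref{equation: commutative diagram} provides the conclusion: the forgetful functor $\For\colon \Auteq_{\Delta}^{C_S}(D^b(X))\twoheadrightarrow \Auteq(D^b(X))^{C_S}$ is surjective, so $\Psi_X$ admits an equivariant enhancement $\tilde\Psi_X\in \Auteq_{\Delta}^{C_S}(D^b(X))$. Setting $\Phi_S\coloneqq \Inf(\tilde\Psi_X)\in\Auteq(D^b(S))$ and using the identification $\lderived\pi^{\ast}\colon D^b(S)\xrightarrow{\sim} D^b_{C_S}(X)$ from \eqref{equation: pi pullback from S to D_G(X)}, the autoequivalence $\Psi_X=\For(\tilde\Psi_X)$ is by construction a $C_S$-equivariant lift of $\Phi_S$ in the sense of \cref{theorem: equivalence induce eq of canonical cover}.

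The main technical point is establishing the commutation $\iota^{\ast}\circ T_{\OO_X}\cong T_{\OO_X}\circ\iota^{\ast}$ at the level of exact functors; everything else is formal, consisting of a careful $\iota$-invariant choice of $H$ and a direct invocation of Ploog's descent. A secondary subtlety is that the equivariant lift $\tilde\Psi_X$ is not unique (it is well-defined only up to a character of $C_S$), but any choice suffices for the conclusion of the proposition.
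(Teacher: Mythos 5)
Your proof is correct and follows essentially the same route as the paper: choose $H=\pi^{\ast}H_S$ so that $-\otimes\OO_X(-H)$ commutes with the covering involution, use $\iota^{\ast}\OO_X\cong\OO_X$ together with the conjugation formula for spherical twists to get $C_S$-invariance of $\Psi_X$, and then descend via Ploog's diagram \eqref{equation: commutative diagram}. You are merely more explicit than the paper about the final descent step (lifting along $\For$ and applying $\Inf$) and about the non-uniqueness of the equivariant enhancement, which is a welcome clarification rather than a deviation.
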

\begin{proof}
    Let $H_S$ be an ample divisor on $S$. 
    As the canonical cover $\pi$ is finite, $H_X\coloneqq \pi^{\ast}H_S$ is ample again. 
    The functor $-\otimes\OO(-H_X)$ commutes with $\iota_{\ast}$
    since $\OO(-H_X)\cong\iota_{\ast}\OO(-H_X)$.
    
    As $\OO_X\cong\iota_{\ast}\OO_X$, 
    \[
        \iota_{\ast}\circ T_{\OO_X}\cong T_{\iota_{\ast}\OO_X}\circ \iota_{\ast} \cong T_{\OO_X}\circ \iota_{\ast}. 
    \]

    Thus, \eqref{equation: descent for aut of canonical cover} and \eqref{equation: commutative diagram} show the claim.
\end{proof}

\begin{thm}\label{theorem: counter eg Enriques}
    Let $\Phi_S\in\Auteq D^b(S)$ be an autoequivalence constructed in \cref{proposition: Enriques surface esixtence of Phi}.
    Then, 
    \[
    \hcat(\Phi_S) > \log\rho(\numg{\Phi}_S).
    \]
\end{thm}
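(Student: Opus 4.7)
The plan is to transfer Ouchi's strict inequality on the K3 cover $X$ down to the Enriques surface $S$ by means of the equivariant machinery of \cref{subsection: canonical cover}. By \cref{proposition: Enriques surface esixtence of Phi}, $\Psi_X = T_{\OO_X}\circ(-\otimes\OO(-H_X))$ is a $C_S$-equivariant lift of $\Phi_S$, so the first step is to invoke \cref{proposition: cat entropy of lift} to conclude
\[
\hcat(\Phi_S) = \hcat(\Psi_X),
\]
and then to apply \cite{ouchi_2020_on_entropy_of_spherical_twists} to obtain the strict inequality $\hcat(\Psi_X) > \log\rho(\numg{\Psi_X})$ on the K3 side.

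The next step is to compare $\rho(\numg{\Psi_X})$ with $\rho(\numg{\Phi_S})$. For this I would use the orthogonal decomposition $N(X)_{\RB} = \pi^{\ast}N(S)_{\RB}\oplus K$, with $K = \ker\pi_{\ast}$, which is preserved by $\numg{\Psi_X}$ exactly as in the proof of \cref{lemma: eigenvalue of Psi on K}: equivariance forces $\numg{\Psi_X}$-invariance of $\pi^{\ast}N(S)_{\RB}$, and the Mukai-isometry property then forces the invariance of $K$. This writes $\numg{\Psi_X}$ in block-diagonal form with $\numg{\Phi_S}$ as one of the two diagonal blocks, immediately yielding $\rho(\numg{\Psi_X}) \geq \rho(\numg{\Phi_S})$.

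Concatenating the three observations produces the chain
\[
\hcat(\Phi_S) = \hcat(\Psi_X) > \log\rho(\numg{\Psi_X}) \geq \log\rho(\numg{\Phi_S}),
\]
which is the desired strict inequality. No serious obstacle arises, since the three main ingredients (comparison of entropies under the equivariant lift, the K3 counterexample, and the block structure of $\numg{\Psi_X}$) have all been prepared in the previous sections. A minor point to note is that when the Picard rank of $X$ equals that of $S$ (i.e.\ $\ell = 0$), the complement $K$ vanishes and the inequality $\rho(\numg{\Psi_X}) \geq \rho(\numg{\Phi_S})$ becomes an equality tautologically, so we do not actually need the stronger fact $\rho(\numg{\Psi_X}|_K) = 1$ from \cref{lemma: eigenvalue of Psi on K} for this argument.
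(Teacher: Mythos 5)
Your proposal is correct and follows essentially the same route as the paper: transfer via \cref{proposition: cat entropy of lift}, apply Ouchi's strict inequality on the K3 cover, and use the block-diagonal form of $\numg{\Psi}_X$ with respect to $N(X)_{\RB}=\pi^{\ast}N(S)_{\RB}\oplus K$ to get $\rho(\numg{\Psi}_X)\ge\rho(\numg{\Phi}_S)$. Your closing remark is a valid minor sharpening — for this one-sided inequality only the block decomposition from the proof of \cref{lemma: eigenvalue of Psi on K} is needed, not the fact that $\rho(\numg{\Psi}_X|_K)=1$, and the case $\ell=0$ is vacuous — whereas the paper simply cites the lemma wholesale.
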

\begin{proof}
    \cref{proposition: cat entropy of lift} and \cref{lemma: eigenvalue of Psi on K} show that Ouchi's counterexample on K3 surface descends to the Enriques surface.
\end{proof}

\bibliographystyle{amsalpha}
\bibliography{bibtex_tyoshida}
\end{document}